\newcommand{\tz}{\tilde{z}}
\newcommand{\GQp}{G_{\mathbb{Q}_p}}
\newcommand{\F}{\mathbb{F}} 
\renewcommand{\ltimes}{l^\times}
\newcommand{\ktimes}{k^\times}
\newcommand{\sigmabar}{\overline{\sigma}}
\def\RCS$#1: #2 ${\expandafter\def\csname RCS#1\endcsname{#2}}
\DeclareMathOperator{\cind}{c-Ind}
\DeclareMathOperator{\ind}{Ind}
\DeclareMathOperator{\Nm}{Nm}
\def\G{\GL_2(K)}
\def\GO{\GL_2(\cO_K)}
\def\D{D^\times}
\def\OD{\cO_D^\times}
\def\CGL{\mathcal{C}(\GL_2(k))}
\def\Cl{\mathcal{C}(l^\times)}
\def\sp{\mathrm{sp}}
\def\crs{\mathrm{cr}}
\newcommand{\To}{\longrightarrow}\newcommand{\into}{\hookrightarrow}
\newcommand{\onto}{\twoheadrightarrow}
\newcommand{\isoto}{\stackrel{\sim}{\To}} 
 \newcommand{\rec}{\operatorname{rec}}
\newcommand{\Z}{\mathbb{Z}} 
\newcommand{\Q}{\mathbb{Q}}
\newcommand{\wt}{\widetilde}
\DeclareMathOperator{\red}{red}
\DeclareMathOperator{\sesi}{ss}
\DeclareMathOperator{\JL}{JL}
\newcommand{\rhobar}{\overline{\rho}}
\newcommand{\GL}{\operatorname{GL}}
\newcommand{\Sp}{\operatorname{Sp}}
\newcommand{\HT}{\operatorname{HT}}
\newcommand{\CO}{{\mathcal{O}}}
\newcommand{\cC}{\mathcal{C}}
\newcommand{\cO}{\mathcal{O}}
 \newcommand{\Qp}{\Q_p}
\newcommand{\Qpbar}{\overline{\Q}_p}
\newcommand{\Zpbar}{\overline{\Z}_p}
\newcommand{\Fpbar}{\overline{\F}_p}
\newcommand{\Fp}{{\F_p}}
\newcommand{\Res}{\operatorname{Res}} 
\newcommand{\Spec}{\operatorname{Spec}}
\newcommand{\Ind}{\operatorname{Ind}}
\newcommand{\Hom}{\operatorname{Hom}}
\newcommand{\Sym}{\operatorname{Sym}}
\newcommand{\diag}{\operatorname{diag}}
\newcommand{\tR}{\widetilde{R}}
\DeclareMathOperator{\ds}{ds}
\newtheorem{thm}[subsection]{Theorem}
\newtheorem{lem}[subsection]{Lemma}
\newtheorem{cor}[subsection]{Corollary}
\newtheorem{conj}[subsection]{Conjecture}
\newtheorem{prop}[subsection]{Proposition}
\theoremstyle{definition}
\newtheorem{defn}[subsection]{Definition}
\theoremstyle{remark}
\newtheorem{rem}[subsection]{Remark}
\begin{document}
\title {The Breuil--M\'ezard Conjecture for quaternion algebras}

\author{Toby Gee} \email{toby.gee@imperial.ac.uk} \address{Department of
  Mathematics, Imperial College London} \author{David Geraghty}
\email{geraghty@math.princeton.edu}\address{Princeton University and
  Institute for Advanced Study} 
\thanks{The first author was
  supported in part by a Marie Curie Career Integration Grant, and by an
  ERC Starting Grant. The second author was partially supported by NSF grants
  DMS-1200304 and DMS-1128155.}
\subjclass[2000]{11F33.}
\begin{abstract}We formulate a version of the Breuil--M\'ezard
  conjecture for quaternion algebras, and show that it follows from
  the Breuil--M\'ezard conjecture for $\GL_2$. In the course of the
  proof we establish a mod $p$ analogue of the Jacquet--Langlands
  correspondence for representations of $\GL_2(k)$, $k$ a finite field
  of characteristic $p$.
\end{abstract}
\maketitle
\tableofcontents
\section{Introduction.}\label{sec:intro}

The Breuil--M\'ezard conjecture (\cite{breuil-mezard}) has proved to
be one of the most important conjectures linking Galois
representations and automorphic forms; indeed, Kisin's proof of (most
cases of) the original formulation of the conjecture (\cite{kisinfmc}) simultaneously
established (most cases of) the Fontaine--Mazur conjecture for
$\GL_2/\Q$. The original conjecture predicted the Hilbert--Samuel
multiplicities of the special fibres of potentially semistable
deformation rings for two-dimensional mod $p$ representations of
$\GQp$, the absolute Galois group of $\Qp$, in terms of the
representation theory of $\GL_2(\Z_p)$. The statement of the conjecture was generalised
in \cite{kisinICM} to the case of representations of $G_K$, for $K$ an
arbitrary finite extension of $\Qp$. This conjecture is largely open,
although it has been proved for potentially Barsotti--Tate
representations (\cite{geekisin}).

The connection between potentially semistable deformation rings
and the representation theory of $\GL_2$ is via the local Langlands
correspondence. Given the Jacquet--Langlands correspondence, it is
natural to wonder whether for potentially semistable deformation rings
of discrete series type, the Hilbert--Samuel multiplicities could also
be described in terms of the representation theory of the units in a non-split
quaternion algebra. One advantage of such a description is that the
representation theory is much simpler in this case; all the
irreducible admissible representations are finite-dimensional, and the
irreducible mod $p$ representations of the maximal compact subgroup are all one-dimensional.

In this paper we formulate such a conjecture, and show that it is a
consequence of the conjecture for $\GL_2$. In particular, we prove the
conjecture (in most cases) over $\Qp$, as a consequence of Kisin's
proof of the conjecture for $\GL_2$ in this case (\cite{kisinfmc}). In
order to do this, we have found it helpful to reformulate the
conjecture slightly more abstractly in terms of linear functionals on
Grothendieck groups of representations, and also to prove a general
result on the reduction modulo $p$ of the Jacquet--Langlands
correspondence, or rather a version of this correspondence for
types. In the remainder of the introduction, we will explain this in
some detail.

Let $K$ be a finite extension of $\Qp$ with absolute Galois group
$G_K$, and let $\tau$ be an inertial type for $K$ (i.e.\ a
two-dimensional representation of the inertia group $I_K$ with open
kernel, which can be extended to $G_K$). Let $\lambda$ be a highest
weight for an irreducible algebraic representation of $\Res_{K/\Q_p}\GL_{2_{/K}}$. Then
a recipe using Henniart's inertial local Langlands correspondence (see
the appendix to \cite{breuil-mezard}) associates to the pair
$(\tau,\lambda)$ a finite-dimensional irreducible representation
$\sigma(\tau,\lambda)$ of $\GL_2(\cO_K)$ over $\Qpbar$. Let $k$ denote the residue
field of $\cO_K$. Choosing a stable lattice, reducing modulo $p$ and semisimplifying, we can
write \[\sigmabar(\tau,\lambda)\cong\oplus_{\sigmabar}\sigmabar^{n_{\tau,\lambda}(\sigmabar)},\]where
$\sigmabar$ runs over the equivalence classes of irreducible mod $p$
representations of $\GL_2(k)$, and $n_{\tau,\lambda}(\sigmabar)$ is a
nonnegative integer.

Let $\rhobar:G_K\to\GL_2(\Fpbar)$ be a continuous representation. Then
there is (after fixing a sufficiently large coefficient field) a
universal lifting ring $R^{\tau,\lambda}$ for lifts of $\rhobar$ which
are potentially semistable of inertial type $\tau$ and Hodge type
$\lambda$. Let $e(R^{\tau,\lambda}/\varpi)$ denote the Hilbert--Samuel
multiplicity of the special fibre of $R^{\tau,\lambda}$. Then the
Breuil--M\'ezard conjecture asserts that there are uniquely determined
nonnegative integers $\mu_{\sigmabar}(\rhobar)$, depending only on
$\rhobar$ and $\sigmabar$ (and not on $\tau$ or $\lambda$) such that
for all $\tau,\lambda$, we have
\begin{equation}\label{eqn: BM GL2 intro}
e(R^{\tau,\lambda}/\varpi)=\sum_{\sigmabar}n_{\tau,\lambda}(\sigmabar)\mu_{\sigmabar}(\rhobar).
\end{equation}
Now, the right hand side of (\ref{eqn: BM GL2 intro}) depends only on
$\sigmabar(\tau,\lambda)$, the semisimplification of the reduction modulo
$p$ of $\sigma(\tau,\lambda)$. Let $R_{\Fpbar}(\GL_2(k))$ denote the
Grothendieck group of finite-dimensional $\Fpbar$-representations of
$\GL_2(k)$; then we may define a linear functional
$\iota:R_{\Fpbar}(\GL_2(k))\to\Z$ by sending $\sigmabar$ to
$\mu_{\sigmabar}(\rhobar)$. Then the right hand side of (\ref{eqn: BM
  GL2 intro}) is just $\iota(\sigmabar(\tau,\lambda))$ by definition. 

With this perspective in mind, let $D$ be the non-split quaternion
algebra with centre $K$, and let $\cO_D$ be the maximal order in
$D$. Suppose that $\tau$ is a discrete series type (that is, it is
scalar or it can be extended to an irreducible representation of
$G_K$). As explained in Section~\ref{sec:types-superc-repr}, a natural
analogue of the procedure above associates a finite-dimensional
representation $\sigma_D(\tau,\lambda)$ of $\cO_D^\times$ to the pair
$(\tau,\lambda)$. If $l$ is the quadratic extension of $k$, then
irreducible mod $p$ representations of $\cO_D^\times$ factor through
$l^\times$, so we see that the natural analogue of the
Breuil--M\'ezard conjecture for $D^\times$ is to ask for a linear
functional $\iota_D:R_{\Fpbar}(l^\times)\to\Z$ with the property that
for all pairs $(\tau,\lambda)$ where $\tau$ is discrete series, we
have
\[e(R^{\tau,\lambda,\ds}/\varpi)=\iota_D(\sigmabar_D(\tau,\lambda)),\]
where $R^{\tau,\lambda,\ds}$ denotes the maximal quotient of
$R^{\tau,\lambda}$ corresponding to discrete series lifts (see
Section~\ref{sec: BM conjecture} for more details).

Our approach in this paper is to deduce the existence of $\iota_D$
from the existence of $\iota$. The existence of such functionals for all
representations $\rhobar$ strongly suggests the possibility of there
being a homomorphism $\JL: R_{\Fpbar}(l^\times)\to
R_{\Fpbar}(\GL_2(k))$ such that $\iota_D=\iota\circ\JL$, and the
  construction of such a map 
 is the main objective of this
  paper. Since elements of the Grothendieck group are determined by
  their Brauer characters, this determines a map between the
  class functions on the semisimple conjugacy classes of $\GL_2(k)$
  and $\ltimes$. The usual Jacquet--Langlands correspondence involves a
  sign-reversing relation between the characters evaluated at regular
  elliptic elements; our correspondence satisfies a close analogue of
  this relation.

  Having written down this map, in order to check that $\iota\circ\JL$
  satisfies the properties required of $\iota_D$, the main fact we
  need to check is that $\JL$ takes $\sigmabar_D(\tau,\lambda)$ to
  $\sigmabar(\tau,\lambda)$ when $\tau$ is of supercuspidal
  type. In other words, we need to check that $\JL$ is compatible with the
  usual Jacquet--Langlands correspondence (or rather the induced
  correspondence for types) and reduction modulo $p$. In order to do
  this, we use results of Carayol~\cite{carayol-cuspidal} on the
  construction of supercuspidal representations as well as results of
  Kutzko~\cite{MR882420} on the
  characters of supercuspidal representations and the characters of the types
  they contain. Fred Diamond has pointed out to us that it is
  presumably also possible to verify this directly using the explicit
  formulas in the appendix to~\cite{breuildiamond}. We suspect that
  the approach taken here will extend to give similar results for
  $\GL_n$ (and that the extension should be relatively straightforward
  when $n$ is prime); we intend to return to this question in future
  work. Florian Herzig pointed out to us that our correspondence $\JL$
  is given (up to a sign) by the reduction modulo $p$ of Deligne--Lusztig
  induction from a non-split torus in $\GL_2$ to $\GL_2$. This
  immediately suggests natural analogues of $\JL$ in the case of $\GL_n$.

  We would like to thank Kevin Buzzard for asking whether there was a
  Breuil--M\'ezard conjecture for quaternion algebras. We would also like to
  thank Matthew Emerton, Florian Herzig, Guy Henniart, Mark Kisin, Vytautas
  Pa{\v{s}}k{\=u}nas, and Shaun Stevens for helpful conversations. 

\subsection{Notation} Fix a prime number $p$ and an algebraic closure
$\Qpbar$ of $\Qp$. This determines an algebraic closure
$\Fpbar$ of $\Fp$.

Fix $K$ a finite extension of $\Qp$ with ring of
integers $\cO_K$, uniformiser $\varpi$, and residue field $k$ of
cardinality $q$. Write $G_K$ for
(a choice of) the absolute Galois group of $K$, and $I_K$ for its
inertia subgroup.

Let $D$ be the (unique up
to isomorphism) non-split quaternion algebra with centre $K$, and let
$\cO_D$ be the maximal order in $D$. Fix a uniformiser $\varpi_D$ of $D$. Let $L$ be the quadratic
unramified extension of $K$, so that $D$ splits over $L$. If $l$ is
the residue field of $L$, then $\cO_D/\varpi_D\cong l$. We let $\nu_D$
denote the valuation on $D$ defined by $\nu_D(x)=\nu_K(\Nm(x))$ where
$\Nm$ is the reduced norm on $D$ and $\nu_K$ the valuation on $K$
normalised by $\nu_K(\varpi)=1$. We define $U_D^0 = \OD$ and  if $a\geq 1$ is an integer, we let 
$U_D^a = 1 + \varpi_D^a \CO_D \subset U_D^0$.

We let $\rec_p$ be the local Langlands correspondence of \cite{ht}, so
that if $\pi$ is an irreducible $\Qpbar$-representation of $\GL_n(K)$,
then $\rec_p(\pi)$ is a Weil--Deligne representation of the Weil group
$W_K$ defined over $\Qpbar$. If $R=(r,N)$ is a Weil--Deligne
representation of $W_K$ (so in particular, $r$ is a representation of
$W_K$ with open kernel and $N$ is a nilpotent endomorphism), then by
$R|_{I_K}$ we mean $r|_{I_K}$.

If $E/\Q_p$ is an algebraic extension and $V$ is a continuous
representation of a compact group $G$ on a finite-dimensional
$E$-vector space $V$, then we define a semisimple representation
$\overline{V}$ of $G$ over the residue field of $E$ as follows: since
$G$ is compact, it stabilizes an $\CO_E$-lattice in $V$. Reducing such
a lattice modulo the maximal ideal of $\CO_E$ and semisimplifying
gives the required representation. This representation is independent
of the choice of lattice by the Brauer--Nesbitt theorem.

If $K$ is a $p$-adic field, $W$ is a de Rham
representation of $G_K$ over $E$, and  $\kappa:K \into E$,
then we will write $\HT_\kappa(W)$ for the multiset of Hodge--Tate
weights of $W$ with respect to $\kappa$.  By definition, the multiset $\HT_\kappa(W)$ contains $i$ with multiplicity
$\dim_{E} (W \otimes_{\kappa,F} \widehat{\overline{F}}(i))^{G_F}
$. Let $\Z^2_+ = \{ (a_1,a_2)\in\Z^2 :
a_1\geq a_2\}$, and fix $\lambda \in
(\Z^2_+)^{\Hom_{\Q_p}(K,E)}$. If $W$ is two-dimensional, then we say that $W$ has \emph{Hodge
  type $\lambda$} if for each $\kappa:K\into E$, we have $\HT_\kappa(W)=\{\lambda_{\kappa,1}+1,\lambda_{\kappa,2}\}$. 

If $G$ is a finite group, we let $R_{\Fpbar}(G)$ denote the
Grothendieck group of the category of finitely generated $\Fpbar[G]$-modules.

If $R$ is a commutative ring, we let $R^{\red}$ denote the maximal
reduced quotient of $R$.

\section{A mod $p$ Jacquet--Langlands correspondence for finite
  groups}\label{sec: mod p JL finite groups}We begin by defining an
analogue of the Jacquet--Langlands correspondence for mod $p$
representations of $\GL_2(\cO_K)$ and $\OD$. The irreducible
mod $p$ representations of these two groups are obtained via inflation
from the irreducible mod $p$ representations of $\GL_2(k)$ and
$l^\times$, and our correspondence is actually between the
Grothendieck groups $R_{\Fpbar}(\GL_2(k))$ and
$R_{\Fpbar}(l^\times)$. 
An element of either Grothendieck group is determined by its Brauer
character so we may equivalently describe our map on the level of
Brauer characters. Both descriptions are given below.
Given an element $\sigmabar$ of either $R_{\Fpbar}(\GL_2(k))$
and $R_{\Fpbar}(l^\times)$, we write $\chi_{\sigmabar}$ for its Brauer
character, which we view as being valued in our fixed $\Qpbar$. Recall
that if $G$ is a finite group, the Brauer character of a finite
$\Fpbar[G]$ module is a function on the $p$-regular conjugacy classes
in $G$. For $G=\GL_2(k)$, the $p$-regular conjugacy classes coincide
with the semisimple conjugacy classes; representative elements for these
conjugacy classes are given by the diagonal matrices, and the matrices $i(z)$,
where $z\in l^\times\setminus k^\times$ and $i:l\into M_2(k)$ denotes a choice of embedding of
$k$-algebras. For $G=l^\times$, the
$p$-regular conjugacy classes are just the elements of
$l^\times$.

\begin{defn}
  \label{defn: JL map on class functions}
We define an additive map
$\JL: R_{\Fpbar}(\ltimes) \to R_{\Fpbar}(\GL_2(k)) $ as
follows:
\begin{itemize}
\item if $\psi: k^\times \to \Fpbar^\times$ is a character, then
\[ \JL([\psi\circ \mathbf{N}_{l/k}]) = [\overline{\sp_{\psi}}] -[ \overline{\psi}\circ \det] \]
\item if $\psi : l^\times \to \Fpbar^\times$ is a character which does
  not factor through the norm $\mathbf{N}_{l/k}$, then
\[ \JL([\psi]) = [\overline{\Theta(\psi)}].\]
\end{itemize}
Here the representations $\sp_\psi$ and $\Theta(\psi)$ are as defined
in~\cite[\S 1]{MR2392355}.
\end{defn}

We let $\CGL$ (resp.\ $\Cl$) be the space of $\Qpbar$-valued class functions on the semisimple
conjugacy classes in $\GL_2(k)$ (resp.\ $l^\times$).  Then we have $\CGL =
R_{\Fpbar}(\GL_2(k))\otimes_{\Z}\Qpbar$ (resp.\ $\Cl =
R_{\Fpbar}(l^\times)\otimes_{\Z}\Qpbar$). Note that $\CGL$ and $\Cl$
have natural ring structures, where multiplication corresponds to the tensor
product on $R_{\Fpbar}(\GL_2(k))$ and $R_{\Fpbar}(l^\times)$. We 
 may also describe $\JL:\Cl\to\CGL$ as follows: If $\chi \in \Cl$ then $\JL(\chi)$ is
  defined by the following rule:
  \begin{align*}
    i(z) & \longmapsto  -\chi(z)-\chi(z^q) && \text{\ if\ }
    z\in\ltimes\setminus\ktimes \\
\begin{pmatrix}
  x&0\\0&x
\end{pmatrix} & \longmapsto (q-1)\chi(x) && \text{\ if\ } x\in\ktimes\\
\begin{pmatrix}
  x&0\\0&y
\end{pmatrix} & \longmapsto 0 && \text{\ if\ } x,y\in\ktimes,x\ne y.
  \end{align*}
  That this
definition agrees with the previous one follows immediately from the
table of Brauer characters in~\cite[\S 1]{MR2392355}.

\section{Types and supercuspidal representations}
\label{sec:types-superc-repr}

In this section we will discuss types
for $\GL_2(\cO_K)$ and $\cO_D^\times$, and the inertial local
Langlands and Jacquet--Langlands correspondences. All representations
in this section will be over $\Qpbar$, unless otherwise stated. Recall that an irreducible
admissible smooth representation of $\GL_2(K)$ is either
one-dimensional, a principal series representation, a twist of the
Steinberg representation, or is a supercuspidal representation. If it
is either supercuspidal or a twist of the Steinberg representation, we
say that it is a \emph{discrete series} representation.  There is a
bijection $\JL$ (the Jacquet--Langlands correspondence) from the
irreducible smooth admissible representations of $D^\times$ (which are
necessarily finite-dimensional) to the discrete series representations
of $\GL_2(K)$ (which are necessarily infinite-dimensional). Under this
correspondence, the 1-dimensional representations of $\D$ correspond
to the twists of the Steinberg representation. More precisely, for each character $\psi :
K^\times \to \Qpbar$, we have $\JL(\psi \circ
\Nm) = \Sp_2(\psi |\;\;|^{-1/2})$. (See 
\cite[p.32]{ht} for this formula and the definition of $\Sp_2(*)$.)

\subsection{Types}
\label{sec:types}

In this paper we will be particularly concerned with \emph{types}, which are
finite-dimensional representations of $\GL_2(\cO_K)$ and
$\cO_D^\times$, and with their relationship to inertial types. An
\emph{inertial type} is a two-dimensional representation $\tau$ of
$I_K$ with open kernel which may be extended to a representation of
$G_K$. We say that $\tau$ is a \emph{discrete series} type if it is
either scalar, or can be extended to an irreducible representation of
$G_K$. In the latter case, we say that $\tau$ is \emph{supercuspidal}.

In the $\GL_2$ case, the theory of types is worked out
explicitly in Henniart's appendix to \cite{breuil-mezard}. We recall
his main result. (We follow \cite{kisinfmc}
in introducing the notation $\sigma^{\crs}(\tau)$.)

\begin{thm}\label{thm: Henniart's types} For any inertial type $\tau$, there
  are unique finite dimensional irreducible representations
  $\sigma(\tau)$ and $\sigma^{\crs}(\tau)$ of $\GL_2(\cO_K)$, with the
  following properties:
  \begin{enumerate}
    \item  if $\pi$ is
  an infinite dimensional smooth irreducible representation
  of $\GL_2(K)$, then $\Hom_{\GL_2(\cO_K)}(\sigma(\tau),\pi)\ne 0$ if
  and only if $\rec_p(\pi)|_{I_K}\cong\tau$, in
  which case $\Hom_{\GL_2(\cO_K)}(\sigma(\tau),\pi)$ is
  one-dimensional.
\item  if $\pi$ is any smooth irreducible representation
  of $\GL_2(K)$, then we have $\Hom_{\GL_2(\cO_K)}(\sigma^{\crs}(\tau),\pi)\ne 0$ if
  and only if $\rec_p(\pi)|_{I_K}\cong\tau$ and the monodromy operator
  $N$ on $\rec_p(\pi)$ is 0. In this case,
  $\Hom_{\GL_2(\cO_K)}(\sigma^{\crs}(\tau),\pi)$ is
  one-dimensional.
  \end{enumerate}
 
\end{thm}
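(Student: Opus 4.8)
The plan is to prove Theorem~\ref{thm: Henniart's types} by deducing it from the explicit construction of types in Henniart's appendix to~\cite{breuil-mezard}, which is exactly where the result is proved; here I would organize the argument around the inertial local Langlands correspondence. The starting point is the Bushnell--Kutzko theory of types for $\GL_2(K)$: for each inertial type $\tau$ one has a finite-dimensional irreducible representation $\sigma(\tau)$ of $\GL_2(\cO_K)$ such that a smooth irreducible infinite-dimensional $\pi$ contains $\sigma(\tau)$ if and only if $\pi$ lies in the Bernstein component determined by $\tau$, i.e.\ if and only if $\rec_p(\pi)|_{I_K}\cong\tau$. One needs to check three things: existence, uniqueness, and the multiplicity-one statement.

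First I would treat existence. Split into cases according to the shape of $\tau$: (i) $\tau$ is scalar (so $\tau = \chi\oplus\chi$ with $\chi$ a character of $I_K$), (ii) $\tau = \chi_1\oplus\chi_2$ with $\chi_1\neq\chi_2$ both extendable characters (the principal series case), and (iii) $\tau$ is irreducible (the supercuspidal case). In cases (i) and (ii) the relevant Bernstein component is a principal series component, and $\sigma(\tau)$ is built by inducing a character of the standard Iwahori (for a ramified twist) or is trivial on $\GL_2(\cO_K)$ after a twist; the cuspidal-type version $\sigma^{\crs}(\tau)$ is the sub-representation cut out by requiring the monodromy to vanish, which on the automorphic side distinguishes the full (un-normalized) principal series from the twist of Steinberg. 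In case (iii) one uses the explicit construction of supercuspidal types via compact induction from a maximal compact-mod-center subgroup, and restricts to $\GL_2(\cO_K)$. For $\sigma^{\crs}(\tau)$: when $\tau$ is irreducible, $N$ is automatically zero on any $\pi$ with $\rec_p(\pi)|_{I_K}\cong\tau$, so $\sigma^{\crs}(\tau)=\sigma(\tau)$; when $\tau$ is reducible, $\sigma^{\crs}(\tau)$ is the summand of $\sigma(\tau)$ (or a related representation) that kills the Steinberg-type constituents.

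Next I would prove the multiplicity-one and uniqueness statements. Multiplicity one follows from the corresponding statement in the theory of types: $\sigma(\tau)$ is an $[\mathfrak{L},\pi]$-type in the sense of Bushnell--Kutzko, and for a genuine type the space $\Hom_{\GL_2(\cO_K)}(\sigma(\tau),\pi)$ has dimension equal to the multiplicity of the corresponding Bernstein datum, which is one for an irreducible $\pi$ in the component. For uniqueness, suppose $\sigma'$ is another irreducible representation of $\GL_2(\cO_K)$ with the same $\Hom$-vanishing property; then $\sigma'$ and $\sigma(\tau)$ occur in exactly the same irreducible smooth representations of $\GL_2(K)$, and a Frobenius reciprocity / Jacquet module argument (or appeal to the classification of $\GL_2(\cO_K)$-types) forces $\sigma'\cong\sigma(\tau)$; the same goes for $\sigma^{\crs}(\tau)$.

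The main obstacle is the supercuspidal case (iii): verifying that the explicitly constructed supercuspidal type, restricted from a maximal compact-mod-center subgroup down to $\GL_2(\cO_K)$, remains irreducible and has the sharp multiplicity-one intertwining property against every supercuspidal $\pi$ with the prescribed inertial type, and no others. This is exactly the content of Henniart's appendix and rests on a careful bookkeeping of the intertwining of simple characters; rather than reproduce it, I would cite~\cite{breuil-mezard} (Henniart's appendix) for this case and give the short direct arguments only in the principal series and Steinberg cases, where everything can be seen explicitly from induced representations from the Iwahori subgroup.
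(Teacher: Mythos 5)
The paper does not prove this theorem. It is stated as a recollection of Henniart's result from the appendix to \cite{breuil-mezard}, and the authors say so explicitly in the sentence before the statement. Your strategy --- defer the substantive supercuspidal case to that same appendix while sketching the surrounding structure --- is therefore the same as the paper's, and your outline (Bushnell--Kutzko types, a case division according to the shape of $\tau$, multiplicity one from the theory of types, uniqueness) is a plausible gloss of what Henniart does.

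One concrete point in your sketch is wrong, however. You describe $\sigma^{\crs}(\tau)$ as ``the sub-representation cut out by requiring the monodromy to vanish,'' and later as ``the summand of $\sigma(\tau)$ \dots\ that kills the Steinberg-type constituents.'' That is not the relationship. The representations $\sigma(\tau)$ and $\sigma^{\crs}(\tau)$ coincide except when $\tau$ is scalar, and when $\tau$ is scalar (say trivial, after twisting) one has $\sigma(\tau)=\sp_1$, the $q$-dimensional Steinberg representation of $\GL_2(k)$ inflated to $\GL_2(\cO_K)$, while $\sigma^{\crs}(\tau)$ is the trivial representation; neither is a subrepresentation or a summand of the other. (They are the two irreducible constituents of the induction of the trivial character from a Borel subgroup of $\GL_2(k)$, which may be what your parenthetical ``or a related representation'' was reaching for.) Relatedly, you assign ``trivial on $\GL_2(\cO_K)$ after a twist'' to $\sigma(\tau)$ in case (i); that phrase describes $\sigma^{\crs}(\tau)$, not $\sigma(\tau)$. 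This is worth getting right because the scalar case of Theorem~\ref{thm:compatibility-JL-redn-mod-p} in this paper relies precisely on the identities that $\sigma^{\crs}(\tau)$ is trivial and $\sigma(\tau)=\sp_1$.
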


There is an analogous (but much simpler) theory for $D^\times$, which
we now recall, following Section 5.2 of \cite{geekisin}.  
Note that
$K^\times\cO_D^\times$ has index two in $D^\times$. Thus if $\pi_D$ is
an admissible smooth representation of $D^\times$, then
$\pi_D|_{\cO_D^\times}$ is either irreducible or a sum of two
irreducible representations which are conjugate under a uniformiser $\varpi_D$ in
$D^\times$. Moreover, we easily see that if $\pi_D'$ is another smooth
irreducible representation of $D^\times$, then $\pi_D$ and $\pi'_D$ differ
by an unramified twist if and only if
$\pi_D|_{\cO_D^\times}\cong\pi'_D|_{\cO_D^\times}$.

Let $\tau$ be a discrete series inertial type. Then by the
Jacquet--Langlands correspondence, there is an irreducible smooth
representation $\pi_{D,\tau}$ of $D^\times$ such that
$\rec_p(\operatorname{JL}(\pi_{D,\tau}))|_{I_K}\cong\tau$. Define
$\sigma_D(\tau)$ to be \emph{one} of the irreducible components of
$\pi_{D,\tau}|_{\cO_D^\times}$; then by the above discussion, we have
the following property.
\begin{thm}\label{thm: types for quaternion algebras}
   Let $\tau$ be a discrete series inertial type. If $\pi_D$ is a smooth irreducible
    $\Qpbar$-representation of $D^\times$ then
    $\Hom_{\cO_D^\times}(\sigma_D(\tau),\pi_D)$ is non-zero if and only
    if
    $\rec_p(\operatorname{JL}(\pi))|_{I_K}\cong\tau$,
    in which case $\Hom_{\cO_D^\times}(\sigma_D(\tau),\pi)$ is
    one-dimensional.
\end{thm}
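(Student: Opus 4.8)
The plan is to reduce the assertion to the elementary structural facts about restriction from $D^\times$ to $\cO_D^\times$ recalled just above, combined with the compatibility of the Jacquet--Langlands correspondence and of $\rec_p$ with unramified twists, and with the standard fact that a discrete series inertial type determines the corresponding discrete series representation of $\GL_2(K)$ up to unramified twist. The whole argument is a chain of equivalences; only the last input is not purely formal.

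First I would note that $\Hom_{\cO_D^\times}(\sigma_D(\tau),\pi_D)\ne 0$ exactly when $\sigma_D(\tau)$ occurs in $\pi_D|_{\cO_D^\times}$. Since $K^\times\cO_D^\times$ has index two in $D^\times$, both $\pi_D|_{\cO_D^\times}$ and $\pi_{D,\tau}|_{\cO_D^\times}$ are either irreducible or of the form $\sigma'\oplus(\sigma')^{\varpi_D}$ with $\sigma'\not\cong(\sigma')^{\varpi_D}$, and $\sigma_D(\tau)$ is a constituent of the second by definition. A short case analysis — distinguishing whether $\pi_{D,\tau}|_{\cO_D^\times}$ is irreducible, and using that an irreducible constituent of a $\varpi_D$-stable representation is $\varpi_D$-stable — shows that $\sigma_D(\tau)$ is a constituent of $\pi_D|_{\cO_D^\times}$ if and only if $\pi_D|_{\cO_D^\times}\cong\pi_{D,\tau}|_{\cO_D^\times}$, and that in this case $\Hom_{\cO_D^\times}(\sigma_D(\tau),\pi_D)\cong\Hom_{\cO_D^\times}(\sigma_D(\tau),\pi_{D,\tau}|_{\cO_D^\times})$ is one-dimensional, by Schur's lemma over the algebraically closed field $\Qpbar$ together with the explicit shape of $\pi_{D,\tau}|_{\cO_D^\times}$. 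This already gives the one-dimensionality clause, and reduces everything to showing $\pi_D|_{\cO_D^\times}\cong\pi_{D,\tau}|_{\cO_D^\times}$ if and only if $\rec_p(\operatorname{JL}(\pi_D))|_{I_K}\cong\tau$.

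Next, by the discussion preceding the theorem, $\pi_D|_{\cO_D^\times}\cong\pi_{D,\tau}|_{\cO_D^\times}$ is equivalent to $\pi_D$ being an unramified twist of $\pi_{D,\tau}$. The Jacquet--Langlands correspondence carries unramified twists on $D^\times$ to unramified twists on $\GL_2(K)$, and $\rec_p$ sends an unramified twist of $\pi$ to the twist of $\rec_p(\pi)$ by an unramified character, which does not change the restriction to $I_K$ (nor the monodromy). So one direction is immediate: an unramified twist $\pi_D$ of $\pi_{D,\tau}$ satisfies $\rec_p(\operatorname{JL}(\pi_D))|_{I_K}\cong\rec_p(\operatorname{JL}(\pi_{D,\tau}))|_{I_K}\cong\tau$. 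For the converse I would invoke that, since $\tau$ is a discrete series type, any two discrete series representations $\pi,\pi'$ of $\GL_2(K)$ with $\rec_p(\pi)|_{I_K}\cong\rec_p(\pi')|_{I_K}$ differ by an unramified twist: writing the Weil--Deligne representations as $(\rho,N)$ and $(\rho',N')$, one checks that $\rho$ is reducible (i.e.\ $\pi$ is a twist of Steinberg) exactly when $\rho'$ is, and that in the irreducible case a nonzero $W_K$-map $\rho'\to\rho\otimes\eta$ coming from $(\rho\otimes(\rho')^\vee)^{I_K}\ne 0$ (for $\eta$ unramified) is an isomorphism on dimension grounds, while in the reducible case the claim is read off from the two $\Sp_2(\ast)$'s; the monodromy operators then match automatically. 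Applying this to $\operatorname{JL}(\pi_D)$ and $\operatorname{JL}(\pi_{D,\tau})$ and reversing the equivalences above gives $\pi_D|_{\cO_D^\times}\cong\pi_{D,\tau}|_{\cO_D^\times}$.

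I expect the main obstacle to be precisely this last step — that a discrete series inertial type pins down the discrete series representation of $\GL_2(K)$ up to unramified twist; everything else is bookkeeping with the index-two subgroup $K^\times\cO_D^\times$ and the already-recalled compatibilities. This last fact is, however, a well-known property of the local Langlands correspondence and can be quoted as such rather than proved from scratch.
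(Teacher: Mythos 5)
Your argument is correct and takes essentially the same route as the paper, whose ``proof'' is precisely the discussion preceding the theorem: that $\pi_D|_{\cO_D^\times}$ is either irreducible or a sum of two non-isomorphic $\varpi_D$-conjugate irreducibles, that two irreducible smooth representations of $D^\times$ have isomorphic restrictions to $\cO_D^\times$ if and only if they are unramified twists of one another, and (implicitly) that a discrete series inertial type determines the corresponding discrete series representation of $\GL_2(K)$ up to unramified twist via the compatibilities of $\JL$ and $\rec_p$ with twisting --- your write-up simply makes these steps explicit, including a proof of the last, quotable, fact. One small wording caveat: your parenthetical ``an irreducible constituent of a $\varpi_D$-stable representation is $\varpi_D$-stable'' is false as literally stated (the two constituents in the reducible case are exactly the non-stable ones); what your case analysis actually needs, and already has available, is only that the full restriction $\pi_D|_{\cO_D^\times}$ is $\varpi_D$-stable, being the restriction of a representation of $D^\times$, together with the non-isomorphy of the two constituents in the reducible case.
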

\begin{rem}
  \label{rem: uniqueness of type in Dtimes case}By the above
  discussion, any representation satisfying the property of
  $\sigma_D(\tau)$ given in Theorem \ref{thm: types for quaternion
    algebras} is necessarily isomorphic to $\sigma_D(\tau)$ or to $\sigma_D(\tau)^{\varpi_D}$.
\end{rem}

For our purposes, we will however require some more precise results:
we will need to know exactly when $\pi_{D,\tau}|_{\OD}$ is irreducible
and we will need to relate the characters of $\sigma(\tau)$ and
$\sigma_D(\tau)$ in a sense we will make precise below.

\subsection{Supercuspidal representations}
\label{sec:superc-repr}

Let $\pi$ be a smooth irreducible representation of $\G$
or $\D$. Then $\pi$ is said to be \emph{minimal} if $\pi$ has minimal
conductor amongst all its twists by characters. Following
\cite{carayol-cuspidal}, we define subgroups
$Z_s$, $K_s$ of $\G$ for $s=1,2$ as follows:
\begin{itemize}
\item $Z_1 = \langle \varpi\rangle$, $K_1 = \GO$,
\item $Z_2 = \langle
  \begin{pmatrix}
    0 & 1 \\ \varpi & 0  \end{pmatrix} \rangle$, $K_2 = \left\{
      \begin{pmatrix}
        a & b \\ c & d
      \end{pmatrix} 
\in \GO : a,d \in \CO^\times, c \in (\varpi), b\in \CO\right\}$. 
\end{itemize}
We refer to \cite[\S 4]{carayol-cuspidal} for the definition of a \emph{very
cuspidal} representation of $Z_sK_s$ $Z_2K_2$ of type $m\geq
1$. When $s=2$, such representations exist only when $m$ is
even.

\begin{thm}[\cite{carayol-cuspidal} Th\'eor\`emes 4.2 \& 8.1]
\label{thm:carayol-classification}

\ 

  \begin{enumerate}
  \item\label{existence} Let $s=1$ or $2$ and set $r=2/s$.  Let $\rho$ be a very
    cuspidal representation of $Z_sK_s$ of type $m$. Then
    $\cind_{Z_sK_s}^{\G}\rho$ is an irreducible minimal
    supercuspidal representation of $\G$ of conductor $r(m-1)+2$.
  \item The representations obtained in \eqref{existence} are all inequivalent.
  \item Every irreducible minimal supercuspidal
    representation of $\G$ is isomorphic to $\cind_{Z_sK_s}^{\G}\rho$
    for a uniquely determined pair $s,\rho$ as in \eqref{existence}.
  \end{enumerate}
\end{thm}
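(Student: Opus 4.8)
The plan is to follow \cite[Th\'eor\`emes 4.2 \& 8.1]{carayol-cuspidal}; below I describe how I would organise the argument, which splits along the three assertions.

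First I would treat \eqref{existence}. Set $J=Z_sK_s$, an open subgroup of $\G$ that is compact modulo the centre. Since $\rho$ is finite-dimensional, $\cind_J^{\G}\rho$ is admissible (because $J\backslash\G$ is discrete) and all of its matrix coefficients are compactly supported modulo the centre, so it is supercuspidal once we know it is irreducible. Irreducibility is exactly what the ``very cuspidal'' hypothesis is designed to deliver: by Mackey's intertwining criterion it suffices to show $\Hom_{J\cap gJg^{-1}}({}^{g}\rho,\rho)=0$ for every $g\in\G\setminus J$, and the definition of a very cuspidal representation of type $m$ is arranged so that the Heisenberg-type constituent of $\rho$ on the relevant congruence subgroup fails to intertwine with any of its nontrivial $\G$-conjugates; one runs through the finitely many types of double coset in $J\backslash\G/J$ for $s=1$ and $s=2$ separately and checks the vanishing. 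For the conductor I would compute the depth of $\cind_J^{\G}\rho$ directly from the filtration by congruence subgroups, equivalently from the $\eps$-factor of $\rec_p(\cind_J^{\G}\rho)$, and then feed $m$ through the conductor/depth relation to get $a=r(m-1)+2$ with $r=2/s$; the parity constraint ($m$ even when $s=2$) matches the fact that supercuspidal conductors of $\G$ are $\geq 2$, with $s=1$ producing the even values and $s=2$ the odd ones.

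Next I would handle part (2) together with minimality. That the inductions are pairwise inequivalent is again the intertwining criterion: if $\cind_{Z_sK_s}^{\G}\rho\cong\cind_{Z_{s'}K_{s'}}^{\G}\rho'$ then some $g\in\G$ must carry one inducing datum into the other, and checking this against the structure of $Z_sK_s$ forces $s=s'$ and $\rho\cong\rho'$. Minimality of the conductor among twists follows because twisting $\pi$ by a character $\chi\circ\det$ corresponds to twisting $\rho$ by $(\chi\circ\det)|_{J}$, which changes $m$ in a controlled way; within each twist-orbit the least value of $m$ is realised, so the representations in \eqref{existence} are precisely the minimal ones.

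The real work, and the step I expect to be the main obstacle, is the exhaustion statement (3). Given a minimal irreducible supercuspidal $\pi$ of $\G$, I would restrict $\pi$ to the congruence filtrations of the two maximal compact-mod-centre subgroups, equivalently analyse its action through hereditary orders in $M_2(K)$, in order to locate a \emph{fundamental stratum}; the residual data of this stratum is either split or irreducible over the residue field, and minimality of $\pi$ rules out the split and scalar cases, so the stratum is attached to the unramified quadratic extension and can be conjugated into the standard shape belonging to $s=1$ or $s=2$. From this one builds, level by level, a very cuspidal representation $\rho$ of $Z_sK_s$ of the appropriate type $m$ with $\Hom_{Z_sK_s}(\rho,\pi)\neq 0$; Frobenius reciprocity then gives a nonzero $\G$-map $\cind_{Z_sK_s}^{\G}\rho\to\pi$, which must be an isomorphism since the source is irreducible by \eqref{existence}, and uniqueness of $(s,\rho)$ follows from part (2). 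The technical heart — extracting and classifying the fundamental stratum and reconstructing $\rho$ from it — is precisely where one needs the classification of hereditary orders in $M_2(K)$ together with the nondegeneracy built into the notion of very cuspidal; everything else is Mackey theory plus the conductor computation.
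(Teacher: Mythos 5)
This statement is quoted verbatim (with attribution) from Carayol, Th\'eor\`emes 4.2 and 8.1; the paper offers no proof of it, so there is no in-paper argument to compare yours against. What you have written is a high-level reconstruction of Carayol's own argument, and in broad outline it matches: irreducibility of the compact induction via Mackey's intertwining criterion using the nondegeneracy built into the ``very cuspidal'' condition, pairwise distinctness by the same criterion, and exhaustion via fundamental strata / hereditary orders, with the technical centre of gravity lying in the exhaustion step. That is a fair summary of the shape of the proof in the cited source.

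Two of your justifications are imprecise enough to flag. First, ``$\cind_J^{\G}\rho$ is admissible because $J\backslash\G$ is discrete'' is not a valid inference: $J\backslash\G$ is discrete for \emph{any} open $J$, yet $\cind_K^{\G}\mathbf{1}$ for $K=\GL_2(\CO_K)$ is far from admissible. Admissibility here comes \emph{after} irreducibility and the compact-support-mod-centre of matrix coefficients, not before, so the logical order in your first paragraph needs to be reversed. Second, defining or computing the conductor through the $\eps$-factor of $\rec_p(\cind_J^{\G}\rho)$ is circular in this context: Carayol's conductor is the automorphic-side notion (via the Godement--Jacquet zeta integral, or the Casselman/JPSS newvector filtration), and identifying that with the Artin conductor of $\rec_p(\pi)$ is a theorem that in Carayol's era was part of what needed to be established, not an input. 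The correct route is to read off the level directly from the filtration by the principal congruence subgroups of $K_s$ and the type $m$ of $\rho$. Neither issue affects the overall correctness of your outline, but both are spots where the argument as written would not hold up.
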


We note that the representations given by \eqref{existence} have even
conductor when $s=1$ and odd conductor when $s=2$. This result in fact
allows us to give an explicit description of the types corresponding to supercuspidal inertial types.

\begin{prop}
  \label{prop:explicit-types-gl2}
Let $\tau : I_K \to \GL_2(\Qpbar)$ be a supercuspidal inertial type. Moreover, assume
that $\tau$ has minimal conductor amongst its twists by smooth
characters that extend to $G_K$. Choose a (necessarily minimal and
supercuspidal) representation $\pi$ of $G$ with $\rec(\pi)$ equal to
an extension of $\tau$ to $W_K$. Write $\pi = \cind_{Z_s K_s}^{\G}\rho$
as in Theorem~\ref{thm:carayol-classification}. Then
\[ \sigma(\tau) \cong \Ind_{K_s}^{\GL_2(\CO_K)} (\rho|_{K_s}).\]
\end{prop}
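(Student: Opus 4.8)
The plan is to identify $\sigma(\tau)$ by checking that $\Ind_{K_s}^{\GL_2(\CO_K)}(\rho|_{K_s})$ has the defining property of $\sigma(\tau)$ given in Theorem~\ref{thm: Henniart's types}(1), together with irreducibility; by the uniqueness clause of that theorem this forces the claimed isomorphism. So the two things to establish are: (a) $\Ind_{K_s}^{\GL_2(\CO_K)}(\rho|_{K_s})$ is irreducible, and (b) for an infinite-dimensional smooth irreducible $\pi'$ of $\G$, one has $\Hom_{\GL_2(\CO_K)}(\Ind_{K_s}^{\GL_2(\CO_K)}(\rho|_{K_s}),\pi')\neq 0$ precisely when $\rec_p(\pi')|_{I_K}\cong\tau$, and then it is one-dimensional.

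First I would set up the case $s=1$ separately, where $K_1=\GL_2(\CO_K)$ and the statement is the tautology $\sigma(\tau)\cong\rho|_{\GL_2(\CO_K)}$; indeed $\pi=\cind_{Z_1 K_1}^{\G}\rho$ with $Z_1K_1 = \langle\varpi\rangle\GL_2(\CO_K) = K^\times\GL_2(\CO_K)$, Mackey/Frobenius reciprocity gives $\Hom_{\GL_2(\CO_K)}(\rho|_{\GL_2(\CO_K)},\pi)\neq 0$, and then one invokes the characterization of $\sigma(\tau)$ as the unique irreducible type contained in $\pi$ to conclude; the equality $\rec_p(\pi)|_{I_K}=\tau$ holds by our choice of $\pi$. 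Thus the content is genuinely in the case $s=2$, where $K_2$ is the Iwahori-type subgroup defined above and $[\GL_2(\CO_K):K_2]=q+1$, so $\Ind_{K_2}^{\GL_2(\CO_K)}(\rho|_{K_2})$ has dimension $(q+1)\dim\rho$. Here I would argue as follows. For irreducibility (a): compute $\dim\Hom_{\GL_2(\CO_K)}(\Ind_{K_2}^{\GL_2(\CO_K)}(\rho|_{K_2}),\Ind_{K_2}^{\GL_2(\CO_K)}(\rho|_{K_2}))$ via Mackey's intertwining formula, reducing to a sum over $K_2\backslash \GL_2(\CO_K)/K_2$ of dimensions of $\Hom$-spaces between $\rho|_{K_2\cap {}^g K_2}$ and its $g$-conjugate; the double cosets are represented by the Weyl element and by unipotent representatives, and one checks using the explicit structure of $\rho$ as a very cuspidal representation (in particular the non-existence of nonzero intertwining for the nontrivial double cosets — this is exactly the kind of computation underlying Carayol's irreducibility of $\cind_{Z_2K_2}^{\G}\rho$, restricted to $\GL_2(\CO_K)$) that only the identity double coset contributes, with multiplicity one. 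For (b): by Frobenius reciprocity $\Hom_{\GL_2(\CO_K)}(\Ind_{K_2}^{\GL_2(\CO_K)}(\rho|_{K_2}),\pi')=\Hom_{K_2}(\rho|_{K_2},\pi'|_{K_2})$, and I would relate this to $\Hom_{Z_2K_2}(\rho,\pi')$ (allowing for the central character): a nonzero $K_2$-embedding, when $\pi'$ has the right central character, extends to a $Z_2K_2$-embedding since $Z_2/(Z_2\cap K_2)$ is generated by the single element $\smallmat{0}{1}{\varpi}{0}$ whose action is pinned down by the central character of $\pi'$ together with $\rho$. Then Theorem~\ref{thm:carayol-classification}(3) and its uniqueness clause identify when $\Hom_{\G}(\cind_{Z_2K_2}^{\G}\rho,\pi')=\Hom_{Z_2K_2}(\rho,\pi')\neq 0$, namely exactly when $\pi'\cong \cind_{Z_2K_2}^{\G}\rho = \pi$ up to the relevant identification, i.e.\ when $\rec_p(\pi')|_{I_K}\cong\tau$; and minimality of $\tau$ ensures we are not missing twists. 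One must be a little careful that $\Hom_{K_2}$ can be strictly larger than $\Hom_{Z_2K_2}$ for $\pi'$ with the wrong central character, but such $\pi'$ will not have $\rec_p(\pi')|_{I_K}\cong\tau$ anyway (the central character is visible in the determinant of the type), so this does not affect the ``if and only if''.

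The main obstacle I anticipate is step (a), the irreducibility of $\Ind_{K_2}^{\GL_2(\CO_K)}(\rho|_{K_2})$, or equivalently the multiplicity-one statement in (b): one has to control the Mackey double-coset sum for the non-identity cosets, and this requires unpacking the internal structure of a very cuspidal representation $\rho$ of $Z_2K_2$ of even type $m$ from \cite{carayol-cuspidal}, in particular how $\rho|_{K_2}$ behaves on the subgroups $1+\varpi_D^a\CO_D$-analogues inside $K_2$ and why no conjugate of this restriction intertwines with it across the Weyl double coset. This is closely parallel to — and can likely be deduced from, by restriction — the computation Carayol already does to prove that $\cind_{Z_2K_2}^{\G}\rho$ is irreducible and cuspidal, but it is the place where real work (rather than formal manipulation of Hom-functors) is needed. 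Everything else — Frobenius reciprocity, matching central characters, and invoking Henniart's uniqueness — is routine once (a) is in hand.
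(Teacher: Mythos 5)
Your approach is genuinely different from the paper's, and considerably more laborious: you try to pin down $\sigma(\tau)$ by verifying the abstract characterization in Theorem~\ref{thm: Henniart's types}(1) together with irreducibility, and then invoking uniqueness. The paper's proof is a single sentence: Henniart, in \S A.3.1 of the appendix to \cite{breuil-mezard}, \emph{constructs} $\sigma(\tau)$ in the supercuspidal case precisely by the recipe ``take a minimal $\pi$ with $\rec_p(\pi)|_{I_K}\cong\tau$, write $\pi=\cind_{Z_sK_s}^{\G}\rho$ via Carayol, and set $\sigma(\tau)=\Ind_{K_s}^{\GL_2(\cO_K)}(\rho|_{K_s})$.'' So the proposition is not a theorem to be re-derived from Theorem~\ref{thm: Henniart's types}; it is a transcription of Henniart's definition, and the uniqueness clause is what Henniart himself proves \emph{about} this construction. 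Your plan essentially re-does a chunk of Henniart's appendix.

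Within your own plan there are also two concrete issues. First, for $s=2$ the key step (irreducibility of $\Ind_{K_2}^{\GL_2(\cO_K)}(\rho|_{K_2})$, equivalently vanishing of the non-identity terms in the Mackey sum over $K_2\backslash\GL_2(\cO_K)/K_2$) is only flagged, not carried out; you are right that this needs the internal structure of the very cuspidal $\rho$, but that is exactly the non-formal content here, and a blind ``it should restrict from Carayol's computation'' is not a proof --- the double-coset spaces for $\GL_2(\cO_K)$ and for $\G$ are genuinely different. Second, your aside on central characters contains a logical slip: you observe that $\Hom_{K_2}$ might be nonzero for some $\pi'$ with the ``wrong'' central character, note that such $\pi'$ has $\rec_p(\pi')|_{I_K}\not\cong\tau$, and conclude ``so this does not affect the if and only if'' --- but if that scenario occurred it would precisely \emph{break} the ``only if'' direction of Theorem~\ref{thm: Henniart's types}(1). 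What actually saves you is that any $\pi'$ with nonzero $\Hom_{K_2}(\rho|_{K_2},\pi'|_{K_2})$ must agree with $\pi$ on the central character restricted to $\cO_K^\times$ (visible inside $K_2$), and the remaining ambiguity on $\varpi$ is an unramified twist, which does not change $\rec_p(\cdot)|_{I_K}$; you should say this rather than invert it.
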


\begin{proof}
  The construction of $\sigma(\tau)$ described by Henniart in \cite[\S
  A.3.1]{breuil-mezard} is exactly the description given in the
  statement of the proposition.
\end{proof}

When we pass to
representations of $\D$ via the Jacquet--Langlands correspondence,
there is a similar dichotomy which tells us precisely when the
restriction to $\OD$ of a smooth irreducible representation of $\D$ is reducible.

\begin{prop}
\label{prop:types-D}
 Let $\pi_D$ be a smooth irreducible minimal
 representation of $\D$ of dimension greater than 1. Write $c(\pi_D)$ for the
 conductor of $\pi_D$.
 \begin{enumerate}
 \item If $c(\pi_D)$ is odd, then $\pi_D|_{\OD}$ is irreducible.
 \item If $c(\pi_D)$ is even, then $\pi_D|_{\OD} \cong \sigma_D \oplus
   \sigma_D^{\varpi_D}$
for some irreducible representation $\sigma_D$ of $\OD$ with
$\sigma_D\not\cong \sigma_D^{\varpi_D}$.
 \end{enumerate}
\end{prop}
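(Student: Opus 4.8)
The plan is to analyze the restriction $\pi_D|_{\OD}$ directly in terms of the structure theory of $\D$ and the conductor. Recall that $K^\times \OD$ has index two in $\D$ (with $\varpi_D$ representing the nontrivial coset), so by Clifford theory $\pi_D|_{\OD}$ is either irreducible or splits as $\sigma_D \oplus \sigma_D^{\varpi_D}$; the two cases are distinguished by whether $\sigma_D \cong \sigma_D^{\varpi_D}$, i.e. by whether $\pi_D$ is isomorphic to its own twist by the unramified quadratic character $\eta$ of $\D$ (the character factoring through $\nu_D$, of order two). Thus the statement to prove is: $\pi_D \cong \pi_D \otimes \eta$ if and only if $c(\pi_D)$ is even. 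So the first step is to reduce everything to computing when $\pi_D \cong \pi_D \otimes \eta$.

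Next I would bring in the Jacquet--Langlands correspondence. Let $\pi = \JL(\pi_D)$, a discrete series representation of $\G$; since $\pi_D$ is minimal of dimension $>1$ it is either supercuspidal or (if $\pi_D$ has conductor $1$, excluded here) Steinberg-type, so in our situation $\pi$ is minimal supercuspidal. The correspondence is compatible with twisting by characters of $K^\times$ pulled back via the reduced norm, so $\pi_D \cong \pi_D \otimes \eta$ if and only if $\pi \cong \pi \otimes \eta'$, where $\eta'$ is the unramified quadratic character of $\GL_2(K)$. Moreover JL preserves conductors (the conductor of $\pi_D$ equals the conductor $a(\rec_p(\pi))$ of the associated Weil--Deligne representation), so it suffices to show: a minimal supercuspidal $\pi$ of $\G$ satisfies $\pi \cong \pi \otimes \eta'$ exactly when its conductor is even. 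By Theorem~\ref{thm:carayol-classification}, write $\pi = \cind_{Z_sK_s}^{\G}\rho$ with $r = 2/s$ and conductor $r(m-1)+2$; as noted just after that theorem, the conductor is even precisely when $s=1$ and odd precisely when $s=2$. So I need to show $\pi \cong \pi \otimes \eta'$ if and only if $s=1$.

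The core computation is therefore: twisting by the unramified quadratic character $\eta'$ and tracking what happens in the two Carayol cases. When $s=1$, $Z_1 K_1 = \langle\varpi\rangle \GO$ already contains a uniformiser, so $\eta'|_{Z_1 K_1}$ is the quadratic character which is trivial on $\GO$ and sends $\varpi \mapsto -1$; twisting $\rho$ by this and re-inducing, one checks via the uniqueness in Theorem~\ref{thm:carayol-classification}(3) that $\cind(\rho \otimes \eta') \cong \cind(\rho)$, possibly after adjusting $\rho$ within its orbit — concretely, $\rho \otimes \eta'$ is again a very cuspidal representation of $Z_1 K_1$ of the same type $m$, and one argues it gives back the same supercuspidal. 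When $s=2$, $Z_2 = \langle \smallmat{0}{1}{\varpi}{0}\rangle$ and $\eta'$ evaluated on the generator of $Z_2$ is $\eta'(\smallmat{0}{1}{\varpi}{0}) = -1$ (its reduced norm, i.e.\ determinant, is $-\varpi$, which has odd valuation); so $\eta'|_{Z_2 K_2}$ is a nontrivial character trivial on $K_2$. Twisting $\rho$ by it changes the central character on $Z_2$ and one shows the resulting compactly induced representation is \emph{not} isomorphic to $\pi$ — again by the uniqueness clause, since the pair $(s, \rho\otimes\eta')$ is a genuinely different pair $(2, \rho')$ with $\rho' \not\cong \rho$. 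The cleanest way to see non-isomorphism is to compare central characters: $\eta'$ restricted to $Z(\GL_2(K)) = K^\times$ sends $\varpi \mapsto -1$, so $\pi$ and $\pi \otimes \eta'$ have different central characters unless $-1$ is... no — $\eta'^2 = 1$ but $\eta'(\varpi) = -1 \neq 1$ when $p \neq 2$; for $p = 2$ one must instead use that the conductors of $\rho$ and $\rho \otimes \eta'$ on $Z_2 K_2$ differ, or argue directly. I would phrase the $s=2$ non-isomorphism via the conductor/level of $\rho$ on $K_2$ being unchanged but the extension to $Z_2 K_2$ being inequivalent.

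The main obstacle I anticipate is the $s=1$ direction — showing $\cind(\rho) \cong \cind(\rho \otimes \eta')$ — because it is not literally that $\rho \cong \rho \otimes \eta'$ (that need not hold), but that the two inductions coincide. One resolves this either by the normaliser/intertwining calculus of \cite{carayol-cuspidal} (the induced representation depends on $\rho$ only up to conjugacy by the normaliser of $Z_1 K_1$ in $\G$, and $\rho \otimes \eta'$ differs from a conjugate of $\rho$), or, more efficiently, by the character-theoretic route: on the regular elliptic set the character of $\cind(\rho\otimes\eta')$ is $\eta'\cdot$ (character of $\cind \rho$), but $\eta' = \eta\circ\det$ restricted to elliptic tori of determinant... — this needs care. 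Alternatively, and perhaps most cleanly, one sidesteps this by working on the $\D$ side from the start: directly exhibit for even conductor an explicit isomorphism $\sigma_D \cong \sigma_D^{\varpi_D}$ is false, so rather: show $\pi_D$ with even conductor admits a model on which $\varpi_D$-conjugation of a chosen $\OD$-subrepresentation is non-trivial, using that the depth/level structure is symmetric under $\varpi_D$. I would present the argument through the $\GL_2$ side via Carayol's classification and the JL compatibility of conductors, flagging the $s=1$ isomorphism as the point requiring the intertwining-support lemma from \cite[\S 4]{carayol-cuspidal}.
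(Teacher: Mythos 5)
Your high-level strategy --- reduce to asking whether $\pi_D \cong \pi_D\otimes\eta$, transfer this via Jacquet--Langlands to $\pi\cong\pi\otimes\eta'$, and then analyze the two cases of Carayol's classification --- is a reasonable one, and the Clifford-theory reduction and the JL twist compatibility are both correct. But there are two problems with the execution.

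First, you have a sign error in the $s=1$ case that leads you to misidentify where the difficulty lies. The generator of $Z_1$ is the \emph{scalar} matrix $\varpi I$, whose determinant is $\varpi^2$; hence $\eta'(\varpi I)=\eta_0(\varpi^2)=1$, not $-1$. In fact $\eta'=\eta_0\circ\det$ is trivial on all of $Z_1K_1=K^\times\GO$, since $\det(Z_1K_1)=\varpi^{2\Z}\cO_K^\times$. So $\rho\otimes\eta'|_{Z_1K_1}=\rho$ on the nose, and $\pi\cong\pi\otimes\eta'$ is immediate --- you need none of the ``intertwining-support lemma'' machinery you flag as the main obstacle. (You conflated $\eta_0$ on $K^\times$ with $\eta'=\eta_0\circ\det$ on scalar matrices.)

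Second, the $s=2$ case --- the one you treat as the easy one --- has a genuine gap. You correctly observe that $\eta'|_{Z_2K_2}$ is nontrivial (trivial on $K_2$, sends the generator $\Pi$ of $Z_2$ to $-1$), so $\pi\otimes\eta'=\cind_{Z_2K_2}^{\G}(\rho\otimes\eta')$, and by Carayol's uniqueness you must show $\rho\not\cong\rho\otimes\eta'|_{Z_2K_2}$. But you do not establish this. Your attempted central-character argument fails for exactly the reason the $s=1$ sign was wrong: $\eta'$ is trivial on scalars. The correct argument would need to know that $\rho$ restricted to the index-two subgroup $\langle\varpi I\rangle K_2=\ker(\eta'|_{Z_2K_2})$ is irreducible, so that any intertwiner $T$ with $T\rho(g)T^{-1}=\eta'(g)\rho(g)$ is forced to be scalar and then leads to the contradiction $\rho(\Pi)=-\rho(\Pi)$. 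This irreducibility is a nontrivial structural fact about Carayol's very cuspidal representations of $Z_2K_2$ and is not supplied in your sketch.

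The paper sidesteps all of this by working entirely on the $D^\times$ side, where $\pi_D$ is finite-dimensional and Carayol (\S\S 6.7--6.9 of \cite{carayol-cuspidal}) exhibits it as $\ind_{K(u)^\times U_D^a}^{\D}(\cdot)$ for $K(u)$ a quadratic extension --- ramified when $c(\pi_D)$ is odd, unramified when even. The double-coset count $\OD\backslash\D/K(u)^\times U_D^a\cong\Z/\nu_D(K(u)^\times)$ immediately gives one orbit in the ramified (odd) case and two in the unramified (even) case; irreducibility in the odd case then follows from a short intertwining computation using the stabilizer $Z_\chi=K(u)^\times U_D^a$. This is both shorter and avoids needing any structural input about $\rho|_{K_2}$ or about the compatibility of the two Carayol classifications under JL.
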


\begin{proof}
  Suppose first of all that $c=c(\pi_D)$ is odd. Let $a = (c-1)/2$ and
  let $\chi$ denote a character of the abelian group
  $U^a_D/U_D^{c-1}$  appearing in $\pi_D|_{U^a_D}$. In \cite[\S
  6.7]{carayol-cuspidal} (where the integer $a$ is denoted $p$), it is
  shown that the stabilizer $Z_{\chi}$ of $\chi$ in $D^\times$ is
  equal to $K(u)^\times U_{D}^a$ where $u \in D$ is an element
  generating a ramified quadratic extension of $K$. In \cite[\S
  6.8]{carayol-cuspidal}, it is shown that
\[ \pi_D \cong \ind_{K(u)^\times U_D^a}^{\D} \psi \]
for some character $\psi$ extending $\chi$.
Now, since $\OD\backslash \D/K(u)^\times U_D^a \cong \Z/\nu_D(K(u)^\times)
=\Z/\Z =0$, it follows that
\[ \pi_D|_{\OD} \cong \ind_{\CO_{K(u)}^\times U_D^a}^{\OD} \psi .\]
Thus, \ $\pi_D|_{\OD}$ is irreducible if and only if for each $t \in \OD-\CO_{K(u)}^\times U_D^a$, the
characters $\psi$ and $\psi^t$ of $H_t:= t\CO_{K(u)}^\times U_D^a
t^{-1} \cap \CO_{K(u)}^\times U_D^a$ are distinct. However, if $\psi =
\psi^t$ on $H_t$ for some $t \in \OD - \CO_{K(u)}^\times U_D^a$, then
since $U^a_D\subset H_t$, we certainly have
$\psi^t|_{U^a_D}=\chi$. Thus, by definition, $t \in Z_\chi =
K(u)^\times U_D^a$, a contradiction.

Suppose now that $c=c(\pi_D)$ is even and set $a = (c-2)/2$. Then in
\cite[\S 6.9]{carayol-cuspidal} it is shown that
\[ \pi_D \cong \ind_{K(u)^\times U_D^a}^{D^\times}\rho \]
where now $u\in D$ generates the quadratic \emph{unramified} extension
of $K$ and $\rho$ is a representation of dimension $1$ or $q^2$. Since 
\[ \OD\backslash \D / K(u)^\times U_D^a \cong \Z/\nu_D(K(u)^\times) =
\Z/2, \]
we deduce immediately that $\pi_D|_{\OD}$ has at least 2
irreducible components. The stated result now follows easily from the
fact that $K^\times \OD$ has index 2 in $\D$.
\end{proof}

We now recall some further results of Carayol.

\begin{prop}
  Let $\pi_D$ be a smooth irreducible minimal
  representation of $\D$ of dimension greater than 1. Let $\pi =
  \JL(\pi_D)$ and write
\[ \pi = \cind_{Z_sK_s}^{\G} \rho \]
for some uniquely determined pair $s,\rho$ as in Theorem
\ref{thm:carayol-classification}~\eqref{existence}. Then
\begin{itemize}
\item If $s=2$, then $(q-1) \dim\pi_D = (q+1) \dim \rho$.
\item If $s=1$, then $(q-1) \dim\pi_D = 2 \dim \rho$.
\end{itemize}
\end{prop}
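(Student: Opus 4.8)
The plan is to compute $\dim \pi_D$ and $\dim \rho$ separately in terms of the conductors and the data appearing in Carayol's constructions, and then to match them up using the compatibility of conductors under the Jacquet--Langlands correspondence. Recall that if $\pi = \JL(\pi_D)$ then $\rec_p(\pi)$ is a Weil--Deligne representation with $N=0$ (since $\pi$ is supercuspidal), so the conductor of $\pi$ equals the conductor $c(\pi_D)$ of $\pi_D$; and by Theorem~\ref{thm:carayol-classification} this conductor is $r(m-1)+2$ with $r = 2/s$, where $m$ is the type of the very cuspidal representation $\rho$ of $Z_sK_s$. So I would first record $c := c(\pi_D) = r(m-1)+2$, noting $c$ is even when $s=1$ and odd when $s=2$, consistently with Proposition~\ref{prop:types-D}.

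\textbf{Computing $\dim \rho$.} Next I would read off $\dim\rho$ from Carayol's construction of very cuspidal representations of $Z_sK_s$ in \cite[\S 4]{carayol-cuspidal}. When $s=1$ (so $K_1 = \GO$, $m$ even, say $m = 2a$ or the appropriate parity) the very cuspidal representation is built by inducing a character of a subgroup of the form $K(u)^\times U^{a}$-type inside $\GO$ up to $\GO$ through an index involving powers of $q$; when $s=2$ it is built analogously over $K_2$. In both cases $\dim\rho$ is a specific power of $q$ (up to a factor of $q+1$ or $q-1$ coming from whether one is over a split or nonsplit intermediate torus), expressible purely in terms of $m$. In parallel, from the proof of Proposition~\ref{prop:types-D} I already have $\pi_D \cong \ind_{K(u)^\times U_D^a}^{\D}\rho_D$ with $\rho_D$ of dimension $1$ or $q^2$ in the even case and dimension $1$ in the odd case, and the relevant index $[\D : K(u)^\times U_D^a]$ (respectively $[\OD : \cdots]$) is again a power of $q$ determined by $a = (c-2)/2$ or $(c-1)/2$. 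So $\dim\pi_D$ is likewise an explicit power of $q$ (times $1$ or $q^2$) in terms of $c$, hence in terms of $m$.

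\textbf{Matching the two sides.} Having both $\dim\pi_D$ and $\dim\rho$ as explicit functions of $m$ (and $s$), the identities $(q-1)\dim\pi_D = (q+1)\dim\rho$ for $s=2$ and $(q-1)\dim\pi_D = 2\dim\rho$ for $s=1$ become elementary algebraic checks; the factors $q-1$, $q+1$, $2$ are exactly the degrees of the relevant quotients of tori (split torus $k^\times \times k^\times$ of order $(q-1)^2$ versus nonsplit torus $l^\times$ of order $q^2-1$, versus the contribution from the centre), which is morally why the formula holds. Alternatively, and perhaps more cleanly, one can avoid the explicit dimension formulas entirely: compute $\dim\pi$ via the formal degree / Plancherel measure, use the known ratio of formal degrees of $\pi$ and $\pi_D$ under Jacquet--Langlands (which introduces precisely the factor $(q-1)/(q+1)$ coming from the ratio of the orders of the compact groups $\GL_2(\CO_K)$ and $\OD$ modulo centre), and then use $\dim\pi$'s expression in terms of $\dim\rho$ via $\pi = \cind_{Z_sK_s}^{\G}\rho$ together with the index of $Z_sK_s$ in $\G$.

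\textbf{Main obstacle.} The step I expect to be most delicate is bookkeeping the exact power of $q$ in $\dim\rho$ and in the induction index $[\D : K(u)^\times U_D^a]$ as a function of the conductor, and making sure the parity conventions for $m$ in Carayol's "type $m$" terminology line up on both the $\GL_2$ and $\D$ sides --- in particular that the integer $a$ appearing in the $\D$-side induction matches $(m-1)/r$ (up to the shift dictated by $c = r(m-1)+2$). Once the conductors are correctly synchronised through $\JL$, the rest is a short computation. For a clean writeup I would lean on the formal-degree approach, citing the standard Jacquet--Langlands compatibility of formal degrees, so that the $(q-1)$ versus $(q+1)$ versus $2$ discrepancy is visibly the ratio of the relevant Haar volumes rather than something one has to extract from two separate explicit dimension formulas.
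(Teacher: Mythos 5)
Your preferred ``clean writeup'' via formal degrees is exactly the paper's proof: Carayol's Proposition~7.4 identifies $\dim\pi_D$ with the formal degree of $\pi$ (in the normalization of \S 5.10 of \emph{loc.\ cit.}), and the formal-degree formulas of \S\S 5.9--5.11 then give the stated relation between $\dim\pi_D$ and $\dim\rho$, with the $(q+1)$ versus $2$ discrepancy coming from the two cases $s=2$, $s=1$. The alternative explicit-dimension bookkeeping you sketch first would also work but is longer and more error-prone, precisely for the parity/conductor-synchronization reasons you flag.
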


\begin{proof}
  By \cite[Proposition 7.4]{carayol-cuspidal}, the dimension of
  $\pi_D$ coincides with the formal degree of $\pi$ (when Haar measure
  on $\G/K^\times$ is normalized as in \cite[\S
  5.10]{carayol-cuspidal}). The stated result now follows from the
  formulas obtained in \cite[\S 5.9 -- 5.11]{carayol-cuspidal}.
\end{proof}

We deduce the following formula relating the dimension of types for
$\G$ and $\D$.

\begin{cor}
\label{cor:type-dim-formula}
  Let $\pi_D$ be a smooth irreducible minimal
  representation of $\D$ of dimension greater than 1. Let $\pi =
  \JL(\pi_D)$ and write
\[ \pi = \cind_{Z_sK_s}^{\G} \rho \]
for some uniquely determined pair $s,\rho$ as in Theorem
\ref{thm:carayol-classification}~\eqref{existence}. Define
\[ \sigma : = \ind_{K_s}^{\GO} (\rho|_{K_s}) \]
and let $\sigma_D$ denote an irreducible constituent of
$\pi_D|_{\CO_D^\times}$. Then
\[ (q-1) \dim \sigma_D = \dim \sigma.\]
\end{cor}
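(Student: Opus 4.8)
The plan is to combine the dimension identity of the previous proposition with the case analysis of Proposition~\ref{prop:types-D}, according to the parity of the conductor $c(\pi_D)$. Recall from Theorem~\ref{thm:carayol-classification} that the conductor of $\pi = \cind_{Z_sK_s}^{\G}\rho$ is $r(m-1)+2$ with $r = 2/s$, so it is even when $s=1$ and odd when $s=2$; moreover the Jacquet--Langlands correspondence preserves conductors, so $c(\pi_D)$ has the same parity. This tells us in which case of Proposition~\ref{prop:types-D} we are, hence how $\pi_D|_{\OD}$ decomposes, hence the relation between $\dim\sigma_D$ and $\dim\pi_D$. It also tells us, via $\sigma = \ind_{K_s}^{\GO}(\rho|_{K_s})$, that $\dim\sigma = [\GO:K_s]\cdot\dim\rho$, and $[\GO:K_s]$ is $1$ when $s=1$ and $q+1$ when $s=2$ (the index of the Iwahori-type subgroup $K_2$).

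First I would treat the case $s=1$ (equivalently $c(\pi_D)$ even). Here $\sigma = \rho|_{K_1}$, so $\dim\sigma = \dim\rho$; by Proposition~\ref{prop:types-D}(2), $\pi_D|_{\OD}\cong\sigma_D\oplus\sigma_D^{\varpi_D}$ with the two summands of equal dimension, so $\dim\pi_D = 2\dim\sigma_D$. The previous proposition in the case $s=1$ gives $(q-1)\dim\pi_D = 2\dim\rho$, hence $2(q-1)\dim\sigma_D = 2\dim\rho = 2\dim\sigma$, i.e.\ $(q-1)\dim\sigma_D = \dim\sigma$, as required.

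Next I would treat the case $s=2$ (equivalently $c(\pi_D)$ odd). Here $\dim\sigma = (q+1)\dim\rho$; by Proposition~\ref{prop:types-D}(1), $\pi_D|_{\OD}$ is irreducible, so $\dim\sigma_D = \dim\pi_D$. The previous proposition in the case $s=2$ gives $(q-1)\dim\pi_D = (q+1)\dim\rho$, hence $(q-1)\dim\sigma_D = (q+1)\dim\rho = \dim\sigma$, again as required.

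The only real point to be careful about is the bookkeeping that links the three inputs consistently: that the parity of $c(\pi_D)$ matches the value of $s$ (via conductor-preservation under $\JL$ and the conductor formula in Theorem~\ref{thm:carayol-classification}), that $[\GO:K_2] = q+1$, and that in the even case the two $\varpi_D$-conjugate constituents genuinely have equal dimension. None of these is deep, so I expect no serious obstacle; the proof is essentially a two-line computation in each parity once these identifications are in place.
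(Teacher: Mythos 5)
Your proof is correct and is precisely the argument the paper intends (the corollary is stated without a proof, being immediate from the preceding unnamed proposition together with Proposition~\ref{prop:types-D} and the index computation $[\GO:K_2]=q+1$). All the bookkeeping you flagged — the parity match between $s$ and $c(\pi_D)$ via the conductor formula and conductor-preservation under $\JL$, and the equal dimensions of $\sigma_D$ and $\sigma_D^{\varpi_D}$ — is exactly right.
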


\begin{rem}
In our definition of the type $\sigma_D(\tau)$ for $\OD$, we arbitrarily choose one of the
irreducible constituents of $\pi_{D,\tau}|_{\OD}$. This has the apparent
disadvantage of breaking the symmetry of the situation but has the
advantage that the dimension formula above holds independently of the
parity of the conductor. Ultimately in our statement of the
Breuil--M\'ezard conjecture for $D^\times$ we will consider both
choices; see Conjecture~\ref{BM conj for quat alg} and Remark~\ref{rem: uniqueness of functional in quat alg case}.
\end{rem}

Keep the notation of the preceding corollary. We now proceed to show
that the characteristic $p$ reductions $\sigmabar$ and $\sigmabar_D$
of $\sigma$ and $\sigma_D$ are related by the mod $p$
Jacquet--Langlands map defined in Definition~\ref{defn: JL map on class functions}. For this we will make use of results of
Kutzko {\cite{MR882420}}.

We will denote the characters of $\sigma$ and $\sigma_D$ by
$\chi_{\sigma}$ and $\chi_{\sigma_D}$ respectively. Note that the
representation $\sigmabar$ (resp.\ $\sigmabar_D$) factors through the quotient $\GO\onto
\GL_2(k)$ (resp.\ $\OD\onto l^\times$). 
We denote the Brauer character of $\sigmabar$ (resp.\ $\sigmabar_D$) by
$\chi_{\sigmabar} : (\GL_2(k)/\sim)^{\sesi} \to \Zpbar$ (resp.\
$\chi_{\sigmabar_D} : l^\times \to \Zpbar$). Here
$(\GL_2(k)/\sim)^{\sesi}$ is the set of semisimple (or equivalently,
$p$-regular) conjugacy classes in $\GL_2(k)$.

\def\tx{\widetilde{x}}
\def\ty{\widetilde{y}}
\def\tz{\widetilde{z}}
\def\tg{\widetilde{g}}

If $x \in l^\times$, we let $\tx \in \CO_L^\times$ denote its
Teichm\"uller lift. Choose an isomorphism of $\CO_K$-modules $i:
\CO_L \isoto \CO_K\oplus \CO_K$. This gives rise to injections
$i : \CO_L^\times \into \GO$ and $i : l^\times \into
\GL_2(k)$. We also fix an embedding $j:L \into D$ giving rise to
an injection $j: \CO_L^\times \into \OD$.

\begin{prop}
\label{prop:supercusp-char-compat}
    Let $\pi_D$ be a smooth irreducible minimal
  representation of $\D$ of dimension greater than 1. Let $\pi =
  \JL(\pi_D)$ and write
\[ \pi = \cind_{Z_sK_s}^{\G} \rho \]
for some uniquely determined pair $s,\rho$ as in Theorem
\ref{thm:carayol-classification}~\eqref{existence}. Define
\[ \sigma : = \ind_{K_s}^{\GO} \rho|_{K_s} \]
and let $\sigma_D$ denote an irreducible constituent of
$\pi_D|_{\CO_D^\times}$. Then
\[ \JL(\sigmabar_D) = \sigmabar.\]
\end{prop}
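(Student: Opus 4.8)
The plan is to compute both Brauer characters explicitly on $p$-regular conjugacy classes of $\GL_2(k)$, namely on the split regular semisimple classes $\diag(x,y)$ with $x\neq y$, the central classes $\diag(x,x)$, and the non-split classes $i(z)$ with $z\in l^\times\setminus k^\times$, and check that $\chi_{\sigmabar}$ equals the class function $\JL(\chi_{\sigmabar_D})$ given by the explicit formulas following Definition~\ref{defn: JL map on class functions}. Since Brauer characters take values in $\Zpbar$ and may be computed by lifting $p$-regular elements to $\CO$-points and evaluating ordinary characters, it suffices to relate $\chi_\sigma$ and $\chi_{\sigma_D}$ on (Teichm\"uller lifts of) such elements. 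Concretely, I must show: $\chi_\sigma(\tg)=0$ when $\tg$ lifts a split regular semisimple class; $\chi_\sigma(\diag(\tx,\tx))=(q-1)\chi_{\sigma_D}(x)$ on the centre; and $\chi_\sigma(i(\tz))=-\chi_{\sigma_D}(z)-\chi_{\sigma_D}(z^q)$ for $z\in l^\times\setminus k^\times$. The first of these should come directly from the induced-representation structure: $\sigma=\Ind_{K_s}^{\GO}(\rho|_{K_s})$, and a split regular semisimple element (being ``far'' from the level structure of a supercuspidal type) has no $K_s$-conjugate fixing it in the relevant sense, so Mackey/Frobenius character formula for induction gives a vanishing sum.

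The substance is the other two identities, and this is where Kutzko's results enter. First I would write $\chi_\sigma(g)$ via the Frobenius formula for $\Ind_{K_s}^{\GO}$ as a sum of $\chi_{\rho|_{K_s}}$ over the $\GO$-conjugates of $g$ lying in $K_s$, and similarly relate $\chi_\rho$ (the character of the inducing datum for $\pi=\cind_{Z_sK_s}^{\G}\rho$) to the supercuspidal character $\chi_\pi$ of $\G$ near elliptic elements. Kutzko~\cite{MR882420} computes exactly these supercuspidal characters and the characters of the types they contain; the key input is that $\chi_\pi$ and $\chi_\rho$, evaluated at a regular elliptic element $\gamma$ generating an unramified quadratic extension, are given by a Gauss-sum-type or character-sum expression on $l^\times$. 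On the quaternion side, $\pi_D=\JL(\pi)$ has character $\chi_{\pi_D}$ related to $\chi_\pi$ by the Jacquet--Langlands character identity: $\chi_{\pi_D}(\delta)=-\chi_\pi(\gamma)$ when $\gamma\leftrightarrow\delta$ are matching regular elliptic elements (of unramified type — the ones that contribute $p$-regular classes). Since $\sigma_D$ is a constituent of $\pi_D|_{\OD}$ and the embedding $j:\CO_L^\times\into\OD$ supplies the matching of $i(\tz)\in\GL_2(k)$ with a unit in the unramified torus of $D$, I can express $\chi_{\sigma_D}(z)$ in terms of $\chi_{\pi_D}$ and hence $\chi_\pi$, and compare with the $K_s$-induction formula for $\chi_\sigma(i(\tz))$. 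The factor $-\chi_{\sigmabar_D}(z)-\chi_{\sigmabar_D}(z^q)$ reflects that $\pi_D|_{\OD}$ may split into $\sigma_D\oplus\sigma_D^{\varpi_D}$ (even conductor, Proposition~\ref{prop:types-D}), with $\sigma_D^{\varpi_D}$ carrying the $q$-Frobenius-twisted character, together with the JL sign; in the odd-conductor case $\pi_D|_{\OD}$ is irreducible and $\sigma_D$ already sees both $z$ and $z^q$, so the same formula holds. The central identity $\chi_\sigma(\diag(\tx,\tx))=(q-1)\chi_{\sigma_D}(x)$ should follow from the dimension relation $(q-1)\dim\sigma_D=\dim\sigma$ of Corollary~\ref{cor:type-dim-formula} twisted by a central character computation: on the centre $\rho$ acts by its central character, which matches that of $\pi_D$ under JL, and evaluating the induction formula just multiplies by the (common) index $\dim\sigma/\dim(\rho|_{K_s})$ and the appropriate $(q-1)$ from the two cases of $s$.

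The main obstacle is the careful bookkeeping of Carayol's and Kutzko's normalizations: matching the inducing data $\rho$ on $Z_sK_s$ with Kutzko's description of the supercuspidal and its type, tracking the Teichm\"uller-lift identifications $i:\CO_L\isoto\CO_K^{\oplus 2}$ and $j:L\into D$ so that the tori correspond correctly, and getting the signs and the factor of $q\pm1$ right in both parities of the conductor simultaneously. Once the character of $\sigma$ at an elliptic element is pinned down as an explicit sum over $l^\times$ and the JL character relation is invoked, the three required identities on $p$-regular classes should fall out; since an element of $R_{\Fpbar}(\GL_2(k))$ is determined by its Brauer character, this yields $\JL(\sigmabar_D)=\sigmabar$. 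One should also double-check that no $p$-regular class is missed — e.g. that elements $i(\tz)$ with $z\in\ker(\mathbf{N}_{l/k})$ or other degenerate $z$ are handled by the same formula — but these are covered by the uniform character-sum expression.
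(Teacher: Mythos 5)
Your overall plan --- reducing to Brauer characters, computing on the three families of $p$-regular classes, invoking the Jacquet--Langlands character identity and Kutzko's supercuspidal/type character computations, and tracking the two parities of the conductor via Proposition~\ref{prop:types-D} --- is exactly the paper's strategy, and your treatment of the central and non-split elliptic classes is sound in outline.

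However, there is a genuine gap in your handling of the split regular semisimple case $g\sim\diag(x,y)$, $x\neq y$. You claim the vanishing of $\chi_\sigma(\tg)$ follows ``directly from the induced-representation structure'' because such an element ``has no $K_s$-conjugate fixing it in the relevant sense, so Mackey/Frobenius gives a vanishing sum.'' This is false in both cases of $s$. When $s=1$ we have $K_1=\GL_2(\CO_K)$, so $\sigma=\rho|_{\GL_2(\CO_K)}$ and there is \emph{no} induction at all; the vanishing of $\chi_\sigma(\diag(\tx,\ty))$ is a nontrivial fact about the character of the very cuspidal representation $\rho$ itself, which is precisely Kutzko's Lemmas 6.3 and 6.4 of \cite{MR882420}. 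When $s=2$, the group $K_2$ \emph{does} contain all diagonal matrices with unit entries, so $\diag(\tx,\ty)$ and many of its conjugates lie in $K_2$ and the Frobenius sum for $\Ind_{K_2}^{\GL_2(\CO_K)}\rho$ is not empty; the vanishing instead comes from the fact that $\chi_{\rho}$ itself vanishes on such elements, which is Kutzko's Proposition 3.4. So this case is not formal but requires explicit input from \cite{MR882420} on both sides of the dichotomy --- your proposal, as written, would leave it unproved.
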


\begin{proof} Since both $\JL(\sigmabar_D)$ and $\sigmabar$ are
  semisimple, it suffices to show that we have an equality of Brauer
  characters $\JL(\chi_{\sigmabar_D})=\chi_{\sigmabar}$.
  Let $g \in \GL_2(k)$ be a $p$-regular element. We need to check that $\JL(\chi_{\sigmabar_D})(g) = \chi_{\sigmabar}(g)$.
There are three cases:
  \begin{enumerate}
  \item \emph{We have $g=x$ with $x\in k^\times$.} In this
    case, we need to show that
\[ (q-1)\chi_{\sigmabar_D}(x) = \chi_{\sigmabar}(x) \]
or equivalently, that $(q-1)\chi_{\sigma_D}(\tx) =
\chi_{\sigma}(\tx)$. This follows from Corollary \ref{cor:type-dim-formula} and
the fact that $\pi$ and $\pi_D$ have the same central character.
\item \emph{We have $g\sim\diag(x,y)$ with $x,y\in k^\times$ distinct.}
  In this case we are required to show that
  $\chi_{\sigmabar}(g)=0$, or equivalently, that
  $\chi_{\sigma}(\tg)=0$ where $\tg = \diag(\tx,\ty)$. If $s=2$, this
  follows from \cite[Prop.\ 3.4]{MR882420} and the Frobenius formula for
  the trace of an induced representation. If $s=1$, it follows from
  \cite[Lemmas 6.3 \& 6.4]{MR882420}.
\item \emph{We have $g\sim i(z)$ for some $z \in l^\times \setminus
    k^\times$.} In this case, we need to show that
\[
-\chi_{\sigmabar_D}(z)-\chi_{\sigmabar_D}(z^q)=\chi_{\sigmabar}(i(z)).\]
Let us first consider the sub-case where $s=2$. Then $\pi_D|_{\OD}$ is
irreducible and it suffices for us to show that
\[ -2\chi_{\sigma_D}(j(\tz)) = \chi_{\sigma}(i(\tz)).\]
We will in fact show that both sides vanish. To see that the right
hand side vanishes, recall that $\sigma = \ind_{K_2}^{\GO}\rho$ and
note that for all $t \in \GO$, we have $t^{-1}i(\tz)t \not\in
K_2$. For the left hand side, we have
\[ - \chi_{\sigma_D}(j(\tz)) = -\chi_{\pi_D}(j(\tz)) =
\chi_{\pi}(i(\tz)), \]
where the second equality is a property of the Jacquet-Langlands
correspondence. The vanishing then follows from \cite[Prop.\
5.5(2)]{MR882420}.

Finally, we treat the sub-case where $s=1$. Then $\pi_D|_{\OD}$ is
reducible and it suffices to show that
\[ - \chi_{\sigma_D}(j(\tz)) - \chi_{\sigma_D}(j(\tz^q)) =
\chi_{\sigma}(i(\tz)).\]
For this, note that the left hand side is just $-\chi_{\pi_D}(j(\tz))$,
which in turn equals $\chi_{\pi}(i(\tz))$. Thus we are required to
show that $\chi_{\pi}(i(\tz))=\chi_{\sigma}(i(\tz))$. Yet this follows
from \cite[Prop 6.11(1)]{MR882420} and the proof is complete.
  \end{enumerate}\vspace{-5mm}
\end{proof}

\section{Compatibility of Jacquet--Langlands correspondences}
\label{sec: compatibility of JL with red mod
  p}In this section we prove our main technical result, a
generalization of Proposition~\ref{prop:supercusp-char-compat} which
includes the case of twists of the Steinberg representation (that is,
the case where $\pi_D$ as in
Proposition~\ref{prop:supercusp-char-compat} is one-dimensional) and incorporates algebraic
representations. Again, all representations will be over $\Qpbar$
unless otherwise stated.

Let $W_\lambda$ be an irreducible algebraic representation of
$\Res_{K/\Q_p}\GL_{2_{/K}}$ of highest weight $\lambda$. More precisely, we let $\lambda \in
(\Z^2_+)^{\Hom_{\Q_p}(K,\Qpbar)}$, and take $W_\lambda$ to be the representation 
\[ W_\lambda = \otimes_{\tau \in \Hom_{\Q_p}(K,\Qpbar)}
(\Sym^{a_{\tau,1}-a_{\tau,2}}\otimes\det{}^{a_{\tau,2}})(\Qpbar^2)\]
of $\Res_{K/\Q_p}\GL_{2_{/K}}\times_{\Q_p}\Qpbar =
 \prod_{\tau}\GL_{2_{/\Qpbar}}$. We regard $W_\lambda$ as a representation
of $\GL_2(K)$ via the map $\prod_{\tau} \tau : \GL_2(K) \to \prod_{\tau}\GL_2(\Qpbar)$.
We can also regard it as a
representation of $D^\times$ as follows: choose an isomorphism
$D\otimes_K L \cong M_{2\times 2}(L)$ and for each $\Q_p$-embedding
$\tau : K \into \Qpbar$ choose an embedding $\wt{\tau} : L \into
\Qpbar$ extending $\tau$. Then we regard $W_\lambda$ as representation
of $\D$ via the chain of maps
$D^\times\into
(D\otimes_KL)^\times\cong\GL_2(L) \stackrel{\prod_{\tau}\wt\tau}{\longrightarrow}
\prod_\tau \GL_2(\Qpbar)$. The isomorphism class of the resulting
representation is independent of any choices.
We can then regard $W_\lambda$ as a
representation of $\GL_2(\cO_K)$ or $\cO_D^\times$, by restriction.

Fix a discrete series inertial type $\tau: I_K \to \GL_2(\Qpbar)$, so that we have
finite-dimensional representations $\sigma(\tau)$ and
$\sigma^{\crs}(\tau)$ (resp.\ $\sigma_D(\tau)$)
of $\GL_2(\cO_K)$ (resp.\ $\cO_D^\times$). Define
\begin{align*}
\sigma(\tau,\lambda)&:=\sigma(\tau)\otimes W_\lambda\\
\sigma^{\crs}(\tau,\lambda) &:= \sigma^{\crs}(\tau)\otimes W_\lambda\\
\sigma_D(\tau,\lambda)&:=\sigma_D(\tau)\otimes W_\lambda,
\end{align*}
regarded as representations of $\GO$ or $\OD$ as appropriate.
Since $\GO$ and $\OD$
are compact, we may consider the corresponding semisimple
$\Fpbar$-representations $\sigmabar(\tau,\lambda)$, $\sigmabar^{\crs}(\tau,\lambda)$ and
$\sigmabar_D(\tau,\lambda)$ obtained by reducing a stable lattice and
semisimplifying. These representations factor through the quotients
$\GO\onto\GL_2(k)$ and $\OD \onto l^\times$.
In the
case $\lambda=0$ (when $W_\lambda$ is the trivial representation), we
will write $\sigmabar(\tau)$, $\sigmabar^{\crs}(\tau)$ and
$\sigmabar_D(\tau)$ for $\sigmabar(\tau,0)$,  $\sigmabar^{\crs}(\tau,0)$  and
 $\sigmabar_D(\tau,0)$.

Let $F_\lambda$ (respectively $F^D_\lambda$) be the representation of
$\GL_2(k)$ (respectively $\ltimes$) obtained from
$W_\lambda|_{\GL_2(\CO_K)}$ (resp.\ $W_\lambda|_{\CO_D^\times}$) by
taking a stable lattice, reducing mod $p$, and semisimplifying. The
following lemma is trivial.
\begin{lem}
  \label{comparison of characters of W lambda}We have $\chi_{F_\lambda}\left(
    \begin{pmatrix}
      x&0\\0&x
    \end{pmatrix}\right)=\chi_{F^D_\lambda}(x)$ for each
  $x\in\ktimes$, and $\chi_{F_\lambda}(i(z))=\chi_{F^D_\lambda}(z)$
  for each $z\in\ltimes\setminus\ktimes$.
\end{lem}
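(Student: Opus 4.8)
The plan is to compute both sides as ordinary characters of $W_\lambda$ evaluated at suitable Teichm\"uller lifts, and then to observe that these lifts have the same eigenvalue multiset in each $\GL_2(\Qpbar)$-factor.

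First I would use the standard description of Brauer characters: if $\bar g$ is a $p$-regular element of a compact group acting on the reduction $\overline L$ of a stable lattice $L$ in a finite-dimensional $\Qpbar$-representation, then, restricting to the finite cyclic subgroup $\langle\bar g\rangle$, one has $\chi_{\overline L}(\bar g)=\chi_{L}(g)$ for any lift $g$ of $\bar g$ of order prime to $p$ (concretely, lift the eigenvalues of $\bar g$ on $\overline L$ by Teichm\"uller and sum). Now $\diag(x,x)\in\GL_2(k)$ has the prime-to-$p$ lift $\diag(\tx,\tx)\in\GO$ and $i(z)\in\GL_2(k)$ has the prime-to-$p$ lift $i(\tz)\in\GO$ (whose order divides $q^2-1$); likewise $x,z\in\ltimes$ have the lifts $\tx,\tz\in\cO_L^\times$, which we push into $\OD$ via $j$ (note $j(\tz)$, being of prime-to-$p$ order, does represent a Teichm\"uller lift of $z$ in $\OD$, and if $j$ happens to induce the $q$-power Frobenius on residue fields this simply replaces $\tz$ by $\tz^q$, which will not change anything below). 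Hence it suffices to prove
\[
\chi_{W_\lambda}\big(\diag(\tx,\tx)\big)=\chi_{W_\lambda}\big(j(\tx)\big),\qquad
\chi_{W_\lambda}\big(i(\tz)\big)=\chi_{W_\lambda}\big(j(\tz)\big),
\]
with $W_\lambda$ regarded as a representation of $\G$ on the left and of $\D$ on the right.

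Next I would unwind how $W_\lambda$ is built: as a representation of $\prod_\tau\GL_2(\Qpbar)$ it is $\otimes_\tau(\Sym^{a_{\tau,1}-a_{\tau,2}}\otimes\det{}^{a_{\tau,2}})(\Qpbar^2)$, pulled back to $\G$ along $\prod_\tau\tau$ and to $\D$ along the composite $\D\hookrightarrow(D\otimes_KL)^\times\cong\GL_2(L)\xrightarrow{\prod_\tau\wt\tau}\prod_\tau\GL_2(\Qpbar)$, where $\wt\tau|_K=\tau$. Since $\tx\in\cO_K^\times$ is central in both $\G$ and $\D$ and maps to $(\tau(\tx)\cdot I)_\tau$ under either pullback (using $\wt\tau|_K=\tau$), the scalar identity is immediate. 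For the other identity, fix $\tau$ and let $\wt\tau$ and $\wt\tau\circ\Frob$ be its two extensions to $L$, with $\Frob$ the nontrivial element of $\Gal(L/K)$; as $\Frob$ reduces to the $q$-power Frobenius on $l$ and Teichm\"uller is Galois-equivariant, $\Frob(\tz)=\tz^q$. The $\tau$-component of $i(\tz)$ is multiplication by $\tz$ on $\cO_L\otimes_{\cO_K,\tau}\Qpbar$, and since $L\otimes_{K,\tau}\Qpbar\cong\Qpbar\times\Qpbar$ via $(\wt\tau,\wt\tau\circ\Frob)$ this is conjugate in $\GL_2(\Qpbar)$ to $\diag(\wt\tau(\tz),\wt\tau(\tz)^q)$. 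Likewise $j(L)\otimes_KL$ is a maximal \'etale subalgebra of $D\otimes_KL\cong M_2(L)$, hence conjugate to the diagonal torus, with $j(\ell)\otimes1\mapsto\diag(\ell,\Frob(\ell))$; applying $\wt\tau$ entrywise, the $\tau$-component of $j(\tz)$ is also conjugate to $\diag(\wt\tau(\tz),\wt\tau(\tz)^q)$.

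Finally, since $\chi_{W_\lambda}=\prod_\tau\chi_{\Sym^{a_{\tau,1}-a_{\tau,2}}\otimes\det{}^{a_{\tau,2}}}$ is a product of symmetric polynomials in the two eigenvalues of each $\GL_2(\Qpbar)$-factor, and we have matched those eigenvalue multisets factor by factor ($\{\tau(\tx),\tau(\tx)\}$ in the scalar case and $\{\wt\tau(\tz),\wt\tau(\tz)^q\}$ in the other), both identities follow. There is no serious obstacle here — the authors rightly call the lemma trivial — the only thing that needs care is the bookkeeping with the two extensions $\wt\tau$ of each $\tau$ and the explicit form of the splitting $D\otimes_KL\cong M_2(L)$, after which the eigenvalue multisets visibly agree.
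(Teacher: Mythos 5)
Your proof is correct. The paper itself offers no proof at all — it simply declares the lemma trivial — so there is nothing to compare against in detail; what you have written is precisely the computation the authors are implicitly relying on. You correctly reduce Brauer characters to ordinary characters at Teichm\"uller lifts, and then match, $\tau$-factor by $\tau$-factor, the eigenvalue multisets of $i(\tilde z)$ and $j(\tilde z)$ (both $\{\widetilde\tau(\tilde z),\widetilde\tau(\tilde z)^q\}$) and of the scalar $\tilde x$ under the two pullbacks of $W_\lambda$. Your parenthetical remark about the ambiguity in whether $j$ followed by reduction induces the identity or Frobenius on residue fields is the one point that deserves noting, and you handle it correctly: the relevant eigenvalue multiset is symmetric under $z\mapsto z^q$, and $i(z)$ is conjugate to $i(z^q)$ in $\GL_2(k)$, so the ambiguity is harmless. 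No gaps.
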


The following theorem expresses the compatibility of our mod $p$
Jacquet--Langlands correspondence with the reduction modulo $p$ of the inertial correspondence.

\begin{thm}
  \label{thm:compatibility-JL-redn-mod-p}
Let $\tau: I_K \to \GL_2(\Qpbar)$ be a discrete series inertial type.
\begin{enumerate}
\item Suppose $\tau$ is scalar. Then for each highest weight $\lambda \in
  (\Z^2_+)^{\Hom_{\Q_p}(K,\Qpbar)}$, we have
\[ \JL(\sigmabar_D(\tau,\lambda)) = \sigmabar(\tau,\lambda) -
\sigmabar^{\crs}(\tau,\lambda).\]

\item Suppose $\tau$ is supercuspidal. Then for each highest weight $\lambda \in
  (\Z^2_+)^{\Hom_{\Q_p}(K,\Qpbar)}$, we have
\[ \JL(\sigmabar_D(\tau,\lambda)) = \sigmabar(\tau,\lambda).\]
\end{enumerate}
\end{thm}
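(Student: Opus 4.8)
The plan is to reduce Theorem~\ref{thm:compatibility-JL-redn-mod-p} to Proposition~\ref{prop:supercusp-char-compat} by peeling off the twist by $W_\lambda$ and, in the scalar case, reducing further to an explicit computation involving twists of the Steinberg representation. Throughout we work on the level of Brauer characters, using that $\JL(\sigmabar_D(\tau,\lambda))$ and $\sigmabar(\tau,\lambda)$ (resp.\ $\sigmabar^{\crs}(\tau,\lambda)$) are semisimple, so that an equality in the relevant Grothendieck group is equivalent to an equality of Brauer characters on the $p$-regular classes $x$, $\diag(x,y)$ ($x\neq y$), and $i(z)$.

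First I would handle the $\lambda$-twist. Since $\sigmabar_D(\tau,\lambda)=\sigmabar_D(\tau)\otimes F^D_\lambda$ and $\sigmabar(\tau,\lambda)=\sigmabar(\tau)\otimes F_\lambda$ (similarly for $\sigma^{\crs}$), multiplicativity of Brauer characters under tensor product and the identification $\CGL=R_{\Fpbar}(\GL_2(k))\otimes\Qpbar$ (a ring under tensor product) let one express everything via the class functions $\chi_{\sigmabar_D(\tau)}$, $\chi_{F^D_\lambda}$, etc. The key point is that $\JL$ is essentially $F_\lambda$-linear in the appropriate sense: from the explicit formula for $\JL$ on class functions (the display with $i(z)\mapsto -\chi(z)-\chi(z^q)$, etc.) together with Lemma~\ref{comparison of characters of W lambda} — which says $\chi_{F_\lambda}(\diag(x,x))=\chi_{F^D_\lambda}(x)$ and $\chi_{F_\lambda}(i(z))=\chi_{F^D_\lambda}(z)$ — one checks directly that $\JL(\chi\cdot\chi_{F^D_\lambda})=\JL(\chi)\cdot\chi_{F_\lambda}$ for any $\chi\in\Cl$, evaluating on each of the three types of $p$-regular classes. (On $i(z)$: $\chi_{F_\lambda}(i(z))\bigl(-\chi(z)-\chi(z^q)\bigr)$, using $\chi_{F_\lambda}(i(z))=\chi_{F_\lambda}(i(z^q))=\chi_{F^D_\lambda}(z)$ since $i(z)$ and $i(z^q)$ are conjugate; on $\diag(x,x)$: clear; on $\diag(x,y)$: both sides vanish.) This immediately reduces part (2) to the case $\lambda=0$, which is exactly Proposition~\ref{prop:supercusp-char-compat} (applied to each supercuspidal $\sigma(\tau)=\sigma^{\crs}(\tau)$ arising from a minimal $\pi$, after first twisting $\tau$ to minimal conductor among its twists and untwisting — the minimality hypothesis in Proposition~\ref{prop:explicit-types-gl2} being harmless since a twist of $\sigma(\tau)$ by a character of $\GL_2(k)$ is matched by the same twist of $\sigma_D(\tau)$, and such twists commute with $\JL$).

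It remains to prove the scalar case with $\lambda=0$: if $\tau$ is scalar, say $\tau=\eps\oplus\eps$ for a character $\eps$ of $I_K$ extending to $G_K$, then $\pi_{D,\tau}$ is one-dimensional, $\pi_{D,\tau}=\psi\circ\Nm$ for some character $\psi$ of $K^\times$, and $\JL(\pi_{D,\tau})=\Sp_2(\psi|\cdot|^{-1/2})$, a twist of Steinberg. Then $\sigma_D(\tau)=\psibar|_{\cO_D^\times}$ factors through $\psibar\circ\Nm$ on $l^\times$, i.e.\ through a character of the form $\phi\circ\mathbf N_{l/k}$ where $\phi=\psibar|_{k^\times}$ (since $\Nm$ on $\cO_D^\times$ reduces to $\mathbf N_{l/k}$ on $l^\times$, using $\cO_D/\varpi_D\cong l$ and $\nu_D(x)=\nu_K(\Nm x)$). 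So by Definition~\ref{defn: JL map on class functions} (first bullet), $\JL(\sigmabar_D(\tau))=[\overline{\sp_\phi}]-[\phi\circ\det]$. I would then identify the right-hand side: $\sigma(\tau)$ is the type attached to the twist-of-Steinberg $\Sp_2(\psi|\cdot|^{-1/2})$, which by Henniart's recipe is $(\psi\circ\det)|_{\GL_2(\cO_K)}$ twisted appropriately — reducing mod $p$, $\sigmabar(\tau)=\overline{\St}\otimes(\phi\circ\det)$ in the notation of~\cite{MR2392355}, which is $[\overline{\sp_\phi}]$; and $\sigma^{\crs}(\tau)$ is the type detecting the principal series with the same inertial type and $N=0$, namely the one-dimensional $(\psi\circ\det)|_{\GL_2(\cO_K)}$, whose reduction is $[\phi\circ\det]$. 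Hence $\sigmabar(\tau)-\sigmabar^{\crs}(\tau)=[\overline{\sp_\phi}]-[\phi\circ\det]=\JL(\sigmabar_D(\tau))$, as required; twisting back in $W_\lambda$ via the first paragraph finishes part (1).

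The main obstacle I expect is bookkeeping in the scalar case: pinning down precisely which representations of $\GL_2(k)$ the reductions of $\sigma(\tau)$ and $\sigma^{\crs}(\tau)$ are — i.e.\ verifying that $\overline{\St}$ has Grothendieck-group class $[\overline{\sp_\phi}]$ and that $\sigma^{\crs}(\tau)$ is exactly the one-dimensional type — using Henniart's explicit construction in \cite[\S A]{breuil-mezard} and the table of Brauer characters in \cite[\S 1]{MR2392355}. The $\lambda$-twist reduction, by contrast, is a routine three-case character check, and the supercuspidal case is essentially immediate from Proposition~\ref{prop:supercusp-char-compat} once the $\lambda$-linearity of $\JL$ is in hand.
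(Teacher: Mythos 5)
Your proposal is correct and follows essentially the same route as the paper's proof. Both reduce to Brauer characters, use Lemma~\ref{comparison of characters of W lambda} to strip off $W_\lambda$ and reduce to $\lambda=0$, then handle case (2) by twisting to minimality and invoking Proposition~\ref{prop:supercusp-char-compat}, and case (1) by explicit identification of $\sigmabar(\tau)$, $\sigmabar^{\crs}(\tau)$ and $\sigmabar_D(\tau)$ (the paper first twists to the trivial type before doing this, but that is equivalent to your direct computation with the character $\phi$).
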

\begin{proof}
Since all of the representations involved are semisimple, it suffices to prove
equalities of Brauer characters. By definition we have
$\chi_{\sigmabar(\tau,\lambda)}=\chi_{\sigmabar(\tau)}\chi_{F_\lambda}$, $\chi_{\sigmabar^\crs(\tau,\lambda)}=\chi_{\sigmabar^\crs(\tau)}\chi_{F_\lambda}$, and
$\chi_{\sigmabar_D(\tau,\lambda)}=\chi_{\sigmabar_D(\tau)}\chi_{F^D_\lambda}$,
so by Lemma~\ref{comparison of characters of W lambda} we may immediately reduce
to the case $\lambda=0$. 

In case (1), since everything is compatible with twists by characters we may
reduce to the case that $\tau$ is the trivial type; but then $\sigma_D(\tau)$
and $\sigma^\crs(\tau)$ are the trivial representation, and
$\sigma(\tau)=\sp_1$, and the result is immediate from Definition~\ref{defn: JL
  map on class functions}. In case (2), after twisting we may reduce to the case
that $\sigma_D(\tau)$ extends to a minimal representation of $D^\times$, and the
result is immediate from Proposition~\ref{prop:supercusp-char-compat}.
\end{proof}

\section{The Breuil--M\'ezard conjecture}\label{sec: BM conjecture}In
this section we prove the main theorem of this paper, relating the
Breuil--M\'ezard conjectures for $\GL_2(\cO_K)$ and $\cO_D^\times$. We
begin by recalling the Breuil--M\'ezard conjecture, reformulated in
terms of Grothendieck groups, as in the introduction.

Fix a finite $E/\Qp$ with ring of integers $\cO$, uniformiser $\varpi$
and residue field $\F$, and fix a continuous representation
$\rhobar:G_K\to\GL_2(\F)$. Let $R^{\square}$ be the universal lifting
ring of $\rhobar$ on the category of complete Noetherian local
$\cO$-algebras with residue field $\F$. 
Let $\tau$ be an inertial
type and $\lambda$ a weight as in Section \ref{sec:
  compatibility of JL with red mod p}. Extending $E$ if necessary, we
may assume that $\tau$, $\sigma(\tau)$, $\sigma^{\crs}(\tau)$ and
$\sigma_D(\tau)$ (when $\tau$ is a discrete series type) are all defined over
$E$. Then, there is a quotient
$R^{\tau,\lambda}$ of $R^{\square}$ which is reduced and $p$-torsion
free, and is ``universal'' for liftings which are potentially
semistable of Hodge type $\lambda$ and inertial type
$\tau$. Specifically, we take $R^{\tau,\lambda}$ to be the  image of
the natural map $R^{\square} \to
(R^{\square}[1/p])^{\tau,\lambda,\red}$ where
$(R^{\square}[1/p])^{\tau,\lambda}$ is the quotient of
$R^{\square}[1/p]$ constructed in~\cite[Theorem 2.7.6]{kisindefrings}
(where our $\lambda$ corresponds to Kisin's $\mathbf{v}$). There is also a universal lifting ring
$R^{\tau,\lambda,\crs}$ which is reduced and $p$-torsion free, and is
universal for liftings which are potentially crystalline of Hodge type
$\lambda$ and inertial type $\tau$. In this case, we take
$R^{\tau,\lambda,\crs}$ to be the image
of the map $R^{\square}\to (R^{\square}[1/p])^{\tau,\lambda,\crs}$,
where the latter is ring constructed in~\cite[Cor.\
2.7.7]{kisindefrings}; it is reduced by~\cite[Theorem 3.3.8]{kisindefrings}.
If $R$ is a complete local Noetherian $\cO$-algebra with residue field
$\F$, then we write $e(R/\varpi)$ for the Hilbert--Samuel multiplicity
of $R/\varpi$. 

\begin{conj}\label{BM conj}
  (The Breuil--M\'ezard Conjecture for $\GL_2$.)

 (1) There is a linear
  functional $\iota:R_\F(\GL_2(k))\to\Z$ such that for each
  $\tau,\lambda$ we have
  $\iota(\sigmabar(\tau,\lambda))=e(R^{\tau,\lambda}/\varpi)$.

(2) There is a linear
  functional $\iota_\crs:R_\F(\GL_2(k))\to\Z$ such that for each
  $\tau,\lambda$ we have
  $\iota(\sigmabar^\crs(\tau,\lambda))=e(R^{\tau,\lambda,\crs}/\varpi)$.
\end{conj}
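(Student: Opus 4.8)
Conjecture~\ref{BM conj} is not proved here; it is the deep local--global input underpinning the rest of the paper, and it is a theorem only in special cases --- over $\Q_p$ by Kisin~\cite{kisinfmc} and in the potentially Barsotti--Tate case for general $K$ by Gee--Kisin~\cite{geekisin}, but otherwise open. Accordingly, the proposal here is to recall the shape of the argument in those cases. The plan is to split the desired identity $e(R^{\tau,\lambda}/\varpi)=\iota(\sigmabar(\tau,\lambda))$, together with its crystalline analogue in part (2), into two opposite inequalities established by entirely different methods, and then to observe that the resulting functional is forced to be independent of $\tau$ and $\lambda$ because the same local quantities $\mu_{\sigmabar}(\rhobar)$ occur for every pair.

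For the inequality $e(R^{\tau,\lambda}/\varpi)\le\iota(\sigmabar(\tau,\lambda))$ I would argue purely locally. Decompose $\sigmabar(\tau,\lambda)$ into Jordan--H\"older factors $\sigmabar$; for each factor, bound the Hilbert--Samuel multiplicity of $R^{\tau,\lambda}/\varpi$ in terms of a multiplicity attached to a smaller deformation problem indexed by $\sigmabar$, using the integral $p$-adic Hodge theory of the corresponding deformation ring. In the potentially Barsotti--Tate situation this is carried out via finite flat group schemes and Kisin modules, and the local numbers $\mu_{\sigmabar}(\rhobar)$ that emerge are taken to define $\iota$.

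For the reverse inequality I would go global: realize $\rhobar$ as the restriction to a decomposition group above $p$ of a modular mod $p$ Galois representation of $G_F$, for a carefully chosen totally real or CM field $F$ in which only one place lies above $p$ and the deformation conditions at the remaining places are rigid. Patching spaces of algebraic automorphic forms of level and weight cut out by $\sigma(\tau,\lambda)$ (the Taylor--Wiles--Kisin method) produces a module $M_\infty$, finite over a formal power series ring over $R^{\tau,\lambda}$, whose reduction $M_\infty/\varpi$ is supported on all of $\Spec R^{\tau,\lambda}/\varpi$. Computing the rank of $M_\infty/\varpi$ in two ways --- geometrically, in terms of $e(R^{\tau,\lambda}/\varpi)$, and automorphically, in terms of the multiplicity with which $\sigmabar(\tau,\lambda)$ occurs in the relevant space of mod $p$ automorphic forms for $\GO$ --- then yields $e(R^{\tau,\lambda}/\varpi)\ge\iota(\sigmabar(\tau,\lambda))$.

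The hard part, and the reason the conjecture is open beyond these cases, is the local upper bound: outside the potentially Barsotti--Tate range there is no sufficiently explicit description of potentially semistable representations with large Hodge--Tate weights, so one cannot directly compute or even usefully bound $e(R^{\tau,\lambda}/\varpi)$. When $K=\Q_p$ this is sidestepped by running the global patching argument in both directions at once, which is essentially the strategy of~\cite{kisinfmc}; adapting that idea in general is the main obstacle.
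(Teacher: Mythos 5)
You have correctly identified that Conjecture~\ref{BM conj} is precisely that --- a conjecture --- and that the paper does not prove it; it is simply stated and used as a hypothesis for Theorem~\ref{thm: BM for GL_2 implies it for Dtimes}, with the known cases invoked in Corollary~\ref{cor: D BM for Qp} and Remark~\ref{rem: unconditional cases of BM}. Your survey of the two-sided inequality strategy (local $p$-adic Hodge theory for the upper bound, Taylor--Wiles--Kisin patching for the lower bound) is an accurate description of how the known cases are established; the one small citation discrepancy is that for $K=\Q_p$, $p \ge 5$ the paper invokes Pa\v{s}k\={u}nas rather than Kisin directly, though that result is of course built on Kisin's. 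No further action is needed here since there is no proof in the paper to compare against.
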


\begin{lem}\label{lem: cris and ss BM conjs agree}If Conjecture~\ref{BM conj} holds, then we necessarily have $\iota=\iota_\crs$.
\end{lem}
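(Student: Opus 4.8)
The plan is to exploit that both $\iota$ and $\iota_\crs$ are additive functionals on $R_\F(\GL_2(k))$, hence determined by their values on the classes of irreducible $\F$-representations (equivalently, on a $\Qpbar$-basis of $\CGL$), and that Conjecture~\ref{BM conj} supplies $\iota$ and $\iota_\crs$ for \emph{all} pairs $(\tau,\lambda)$. So it is enough to produce enough pairs $(\tau,\lambda)$ on which $\iota$ and $\iota_\crs$ provably agree, and to check that the corresponding classes span.

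First I would record the key local input: a two-dimensional potentially semistable representation of $G_K$ with non-zero monodromy operator has, up to twist, Weil--Deligne representation with semisimplification $\mathbf 1\oplus|\cdot|$, so its restriction to $I_K$ is scalar. Hence, if $\tau$ is supercuspidal (a non-scalar discrete series type), then every potentially semistable lift of $\rhobar$ of inertial type $\tau$ is automatically potentially crystalline, so $R^{\tau,\lambda}=R^{\tau,\lambda,\crs}$; and the same observation, together with Theorem~\ref{thm: Henniart's types}, shows $\sigma(\tau)=\sigma^{\crs}(\tau)$, whence $\sigmabar(\tau,\lambda)=\sigmabar^{\crs}(\tau,\lambda)$ for every $\lambda$. (More generally, $R^{\tau,\lambda}=R^{\tau,\lambda,\crs}$ for \emph{any} $\tau$ as soon as $\lambda$ is not of Steinberg type, i.e.\ as soon as $\lambda_{\kappa,1}=\lambda_{\kappa,2}$ fails for some $\kappa$.) Combining with Conjecture~\ref{BM conj} then gives, for $\tau$ supercuspidal and all $\lambda$,
\[ \iota(\sigmabar(\tau,\lambda))=e(R^{\tau,\lambda}/\varpi)=e(R^{\tau,\lambda,\crs}/\varpi)=\iota_\crs(\sigmabar^{\crs}(\tau,\lambda))=\iota_\crs(\sigmabar(\tau,\lambda)),\]
so $\iota$ and $\iota_\crs$ agree on the subspace of $\CGL$ spanned by the $\sigmabar(\tau,\lambda)$ with $\tau$ supercuspidal. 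I would record the same (easier) remark for principal series inertial types $\tau=\chi_1|_{I_K}\oplus\chi_2|_{I_K}$ with $\chi_1|_{I_K}\neq\chi_2|_{I_K}$, which are likewise non-scalar and to which the identical argument applies.

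Next I would show that these classes span $\CGL$. Since a virtual representation is determined by its Brauer character, it suffices to check that every $p$-regular conjugacy class of $\GL_2(k)$ lies in the support of the Brauer character of $\sigmabar(\tau)$ for some non-scalar $\tau$. For the non-split (elliptic) classes $i(z)$ one uses the tame supercuspidal types of conductor $2$: by the character table in \cite[\S1]{MR2392355} (cf.\ the proof of Proposition~\ref{prop:supercusp-char-compat}), $\chi_{\sigmabar(\tau_\theta)}(i(z))=-\overline\theta(z)-\overline\theta(z)^q$, which is non-zero for a suitably chosen regular $\theta$. For the scalar and split-regular diagonal classes one uses tame principal series types together with a short computation of induced characters. (For very small residue fields there is a residual one-dimensional, i.e.\ Steinberg-weight, direction not reached this way; there one supplements the argument using that a two-dimensional semistable non-crystalline representation occurs only in Steinberg Hodge type, equivalently that the Hilbert--Samuel multiplicities of the ``weight $2$'' semistable and of the corresponding crystalline deformation rings coincide.) Granting this, $\iota=\iota_\crs$ on $\CGL$, and as both are integer-valued on $R_\F(\GL_2(k))$ we conclude $\iota=\iota_\crs$.

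The main obstacle is the spanning statement in the previous paragraph — in particular arranging that the split-regular diagonal classes are detected. Characters of supercuspidal types vanish on split regular elements, and characters of principal series types there only see the tame parts of the inducing characters, so for small $q$ one is forced to bring in the comparison between semistable and crystalline deformation rings in Steinberg Hodge type rather than relying on characters of types alone.
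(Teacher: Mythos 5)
Your overall strategy coincides with the paper's: both reduce to showing that $\iota$ and $\iota_\crs$ must agree because they coincide on $\sigmabar(\tau,\lambda)$ for all non-scalar $\tau$ (where $R^{\tau,\lambda}=R^{\tau,\lambda,\crs}$ and $\sigma(\tau)=\sigma^{\crs}(\tau)$), and then argue that these elements determine the functional. Your step ``it suffices to check that every $p$-regular conjugacy class lies in the support of $\chi_{\sigmabar(\tau)}$ for some non-scalar $\tau$'' is precisely the paper's observation in disguise: multiplying by the $\chi_{F_\lambda}$, which span $\cC(\GL_2(k))$, makes the span of the $\chi_{\sigmabar(\tau,\lambda)}$ an \emph{ideal} of the product ring $\cC(\GL_2(k))$, and an ideal is everything iff the generators do not all vanish at any one semisimple class. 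The paper then simply reads this off the Brauer character table of \cite[\S1]{MR2392355} and stops. Up to this point your argument is essentially theirs.

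Where your proposal goes wrong is in the parenthetical claims and the proposed fallback. First, the assertion that $R^{\tau,\lambda}=R^{\tau,\lambda,\crs}$ for any $\tau$ once $\lambda$ is not ``of Steinberg type'' is false: a twist of a semistable non-crystalline extension (Tate-curve-like, with $N\neq 0$) by a crystalline character of arbitrary Hodge--Tate weights is again semistable non-crystalline, so for scalar $\tau$ the rings $R^{\tau,\lambda}$ and $R^{\tau,\lambda,\crs}$ can differ for \emph{all} $\lambda$. Second, the fallback you invoke, ``a two-dimensional semistable non-crystalline representation occurs only in Steinberg Hodge type, equivalently $e(R^{\tau,\lambda}/\varpi)=e(R^{\tau,\lambda,\crs}/\varpi)$ in weight $2$,'' is also false for the same reason, and the alleged equivalence would force $e(R^{\tau,\lambda,\ds}/\varpi)=0$ for scalar $\tau$, contradicting the whole point of the conjecture for $D^\times$. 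So the patch you propose for small residue fields does not hold up.

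Finally, your worry about small $q$ is partly real but is repaired the wrong way. For $q=3$, the lone non-scalar \emph{tame} principal series type does have vanishing Brauer character at the unique split-regular class $\diag(1,-1)$, and cuspidal types always vanish there, so tame types alone are insufficient. But one can use a \emph{wild} (conductor $> 1$) principal series type $\chi_1\oplus\chi_2$ with $\chi_1$ trivial and $\chi_2$ of $p$-power order: then the relevant induced character at the Teichm\"uller lift of $\diag(1,-1)$ works out to $\chi_2(-1)+q$, which reduces to $1\pmod p$ when $\chi_2$ has odd order. So the spanning is genuinely a statement about Brauer characters of types (possibly of higher conductor), not something that requires bringing in deformation-ring multiplicities. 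This is the only spot where the paper's citation of \cite[\S1]{MR2392355} is a little terse, and your instinct that something extra is needed for small $q$ is sound --- but the extra input is another type, not the false comparison of multiplicities.
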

\begin{proof}
Since $R^{\tau,\lambda}=R^{\tau,\lambda,\crs}$ and $\sigma^\crs(\tau,\lambda)=\sigma(\tau,\lambda)$ unless $\tau$ is a scalar type,
it is enough to show that $\iota$ (and thus $\iota_\crs$) is uniquely determined
by its values on the $\sigmabar(\tau,\lambda)$ for $\tau$ non-scalar. We may
replace $R_\F(\GL_2(k))$ by $R_\F(\GL_2(k))\otimes_\Z\Qpbar$, so it suffices to
prove that $\cC(\GL_2(k))$ is spanned by the Brauer characters $\chi_{\sigmabar(\tau,\lambda)}$ for
$\tau$ non-scalar. Now,
$\chi_{\sigmabar(\tau,\lambda)}=\chi_{\sigmabar(\tau)}\chi_{F_\lambda}$, and the
$\chi_{F_\lambda}$ span $\cC(\GL_2(k))$, so the span of the $\chi_{\sigmabar(\tau,\lambda)}$ for
$\tau$ non-scalar is an ideal in  $\cC(\GL_2(k))$ (the ideal generated
by the $\chi_{\sigmabar(\tau)}$ for $\tau$ non-scalar). 

Since the maximal ideals in $\cC(\GL_2(k))$ are given by the sets of functions
which vanish on some semisimple conjugacy class, it suffices to show that for
each semisimple conjugacy class, there is some non-scalar type $\tau$ such that
$\chi_{\sigmabar(\tau)}$ does not vanish on that class; but this follows
immediately from the table of Brauer characters in~\cite[\S 1]{MR2392355}.
\end{proof}

The obvious variant for representations of $D^\times$ is as
follows. Let $\lambda$ and $\tau$ be as above. Let $R^{\tau,\lambda,\ds}$
denote the maximal reduced $p$-torsion free quotient of $R^{\tau,\lambda}$
which is supported on the set of irreducible components of $\Spec
R^{\tau,\lambda}$ where the associated Weil--Deligne representation is generically
of discrete series type. More specifically, if $\tau$ is a supercuspidal type, then
$R^{\tau,\lambda,\ds}=R^{\tau,\lambda}=R^{\tau,\lambda,\crs}$; if $\tau$ is a principal
series type, then $R^{\tau,\lambda,\ds}=0$; while if $\tau$ is a
scalar type, then $\Spec R^{\tau,\lambda,\ds}$ is the union the
irreducible components of $\Spec R^{\tau,\lambda}$ not occurring in $\Spec R^{\tau,\lambda,\crs}$, with the reduced
induced scheme structure. Recall that for some $\tau$ of supercuspidal
type, we chose $\sigma_D(\tau)$ to be one of the two irreducible
constituents of the restriction to $\cO_D^\times$ of a certain
supercuspidal representation; in the statement of the following
conjecture, we consider both choices.

\begin{conj}\label{BM conj for quat alg}
  There is a linear functional $\iota_D:R_{\F}(\ltimes)\to\Z$ such that
  for each discrete series type $\tau$, each algebraic weight
  $\lambda$, and each choice of $\sigma_D(\tau)$ we have
  $\iota_D(\chi_{\sigmabar_D(\tau,\lambda)})=e(R^{\tau,\lambda,\ds}/\varpi)$.
\end{conj}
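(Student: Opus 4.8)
The plan is to deduce Conjecture~\ref{BM conj for quat alg} from Conjecture~\ref{BM conj} by taking $\iota_D = \iota \circ \JL$, where $\JL\colon R_\F(\ltimes)\to R_\F(\GL_2(k))$ is the map of Definition~\ref{defn: JL map on class functions} (extended $\F$-linearly, after enlarging $E$ so that everything is defined over $E$) and $\iota$ is the functional furnished by Conjecture~\ref{BM conj}. This is manifestly a linear functional $R_\F(\ltimes)\to\Z$, so the only thing to check is that for each discrete series type $\tau$, each algebraic weight $\lambda$, and each choice of $\sigma_D(\tau)$, we have $\iota(\JL(\sigmabar_D(\tau,\lambda))) = e(R^{\tau,\lambda,\ds}/\varpi)$.

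First I would split into the two cases of Theorem~\ref{thm:compatibility-JL-redn-mod-p}. When $\tau$ is supercuspidal, that theorem gives $\JL(\sigmabar_D(\tau,\lambda)) = \sigmabar(\tau,\lambda)$, while $R^{\tau,\lambda,\ds} = R^{\tau,\lambda}$ by the discussion preceding the conjecture; hence $\iota(\JL(\sigmabar_D(\tau,\lambda))) = \iota(\sigmabar(\tau,\lambda)) = e(R^{\tau,\lambda}/\varpi) = e(R^{\tau,\lambda,\ds}/\varpi)$, using Conjecture~\ref{BM conj}(1). Note this case does not see the choice of $\sigma_D(\tau)$ versus $\sigma_D(\tau)^{\varpi_D}$ since the two have the same semisimplified reduction (or, more simply, because Theorem~\ref{thm:compatibility-JL-redn-mod-p}(2) is stated for either choice via Remark~\ref{rem: uniqueness of type in Dtimes case}).

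When $\tau$ is scalar, Theorem~\ref{thm:compatibility-JL-redn-mod-p}(1) gives $\JL(\sigmabar_D(\tau,\lambda)) = \sigmabar(\tau,\lambda) - \sigmabar^\crs(\tau,\lambda)$, so by linearity of $\iota$ and Lemma~\ref{lem: cris and ss BM conjs agree} (which identifies $\iota$ with $\iota_\crs$),
\[
\iota(\JL(\sigmabar_D(\tau,\lambda))) = \iota(\sigmabar(\tau,\lambda)) - \iota_\crs(\sigmabar^\crs(\tau,\lambda)) = e(R^{\tau,\lambda}/\varpi) - e(R^{\tau,\lambda,\crs}/\varpi).
\]
It therefore remains to check that $e(R^{\tau,\lambda,\ds}/\varpi) = e(R^{\tau,\lambda}/\varpi) - e(R^{\tau,\lambda,\crs}/\varpi)$ for scalar $\tau$. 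This is the additivity of Hilbert--Samuel multiplicity over the decomposition of $\Spec R^{\tau,\lambda}$ into irreducible components: by construction $\Spec R^{\tau,\lambda,\crs}$ and $\Spec R^{\tau,\lambda,\ds}$ are the closed subschemes supported on complementary sets of irreducible components of $\Spec R^{\tau,\lambda}$ (each with reduced structure), and $R^{\tau,\lambda}$ itself is reduced and $p$-torsion free. Reducing mod $\varpi$, one uses that $e(-/\varpi)$ of a reduced equidimensional complete local ring is the sum of the multiplicities of its components, together with the fact (from Kisin's construction, loc.\ cit.) that all these components have the same dimension; the generic points not lying on $\Spec R^{\tau,\lambda,\crs}$ are exactly those of $\Spec R^{\tau,\lambda,\ds}$. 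I would cite the relevant dimension and component statements from~\cite{kisindefrings}, \cite{kisinfmc}, or~\cite{geekisin} for this bookkeeping.

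The main obstacle is the last step in the scalar case: one must be careful that the ``additivity of multiplicity'' is applied correctly, i.e.\ that passing to special fibres commutes with the decomposition into components in the sense needed (equidimensionality of the generic fibre, flatness over $\cO$, and that no component of $\Spec R^{\tau,\lambda}/\varpi$ is lost or doubled). Everything else is a formal consequence of Theorem~\ref{thm:compatibility-JL-redn-mod-p}, Lemma~\ref{lem: cris and ss BM conjs agree}, and the definitions; the genuinely substantive input — the compatibility of $\JL$ with the inertial Jacquet--Langlands correspondence and reduction mod $p$ — has already been established.
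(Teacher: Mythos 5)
Your proposal is correct and reproduces the paper's proof of Theorem~\ref{thm: BM for GL_2 implies it for Dtimes} essentially verbatim: take $\iota_D=\iota\circ\JL$, apply Theorem~\ref{thm:compatibility-JL-redn-mod-p} (together with Lemma~\ref{lem: cris and ss BM conjs agree} in the scalar case), and finish the scalar case with additivity of Hilbert--Samuel multiplicity, for which the paper cites the precise statement you are gesturing at, namely \cite[Prop.\ 1.3.4]{kisinfmc} applied to the map $R^{\tau,\lambda}\to R^{\tau,\lambda,\crs}\oplus R^{\tau,\lambda,\ds}$. One minor inaccuracy worth flagging: it is not true in general that $\sigmabar_D(\tau)$ and $\sigmabar_D(\tau)^{\varpi_D}$ have the same semisimplified reduction (they differ by the involution $x\mapsto x^q$ on $\ltimes$); the correct justification for the independence of the choice is your parenthetical ``more simply'' alternative, or equivalently the observation that $\JL$ of Definition~\ref{defn: JL map on class functions} is invariant under that involution.
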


\begin{rem}
  \label{rem: uniqueness of functional in quat alg case}As in the proof of Lemma~\ref{lem: cris and ss BM conjs agree}, a functional $\iota_D$ as in Conjecture \ref{BM conj for quat
    alg} is necessarily unique. Note that in the case that there are
  two choices of $\sigma_D(\tau)$, the two possibilities are related by conjugation
  by $\varpi_D$, and in the other case $\sigma_D(\tau)$ is invariant
  under conjugation by $\varpi_D$. The representation $W_\lambda$ is
  also invariant under conjugation by $\varpi_D$ (as it is a
  representation of $D^\times$). Conjugation by $\varpi_D$ induces
  the involution $c:x\mapsto x^q $ on $l^\times$, so rather than
  insisting on allowing both choices of $\sigma_D(\tau)$ in the
  statement of Conjecture~\ref{BM conj for quat alg}, we could
  equivalently have insisted that $\iota_D$ be invariant under the
  action of $c$, and only used one choice of $\sigma_D(\tau)$.
\end{rem}

Before stating our main result, we note that in the case where $\tau$
is a scalar type, the potentially semistable deformation ring of
weight $\lambda$ and type $\tau$ constructed in~\cite{kisindefrings}
is not necessarily reduced. More specifically, we denote by
$\tR^{\tau,\lambda}$ the image of the map $R^{\square}\to (R^{\square}[1/p])^{\tau,\lambda}$;
it is $p$-torsion free, equidimensional and its generic fibre is
generically reduced (by~\cite[Theorem
3.3.4]{kisindefrings}). The ring $R^{\tau,\lambda}$ is its maximal reduced quotient.
Similarly, we may consider quotients $\tR^{\tau,\lambda,\ds}$ of the
ring $\tR^{\tau,\lambda}$ that are $p$-torsion free and have support
consisting of
the irreducible components generically of discrete series
type. (There need not be a maximal such quotient.)
The ring $R^{\tau,\lambda,\ds}$ is the maximal reduced quotient of any such
$\tR^{\tau,\lambda,\ds}$.  If we work with these potentially larger
rings $\tR^{\tau,\lambda}$ and $\tR^{\tau,\lambda,\ds}$, then the question
arises as to whether the Hilbert Samuel multiplicities of the special
fibres change. The following lemma shows that this is not the
case. 

\begin{lem}
  \label{prop:hs-mult-and-reducedness}
Let $R$ be a complete Noetherian $\CO$-algebra with residue field $\F$.
Suppose that $R$ is $p$-torsion free, equidimensional, and that
$R[1/p]$ is generically reduced. Then
\[ e(R/\varpi) = e(R^{\red}/\varpi) .\]
\end{lem}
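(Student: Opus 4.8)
The plan is to realise $e(R/\varpi)$ and $e(R^{\red}/\varpi)$ as two of the terms contributing to the Hilbert--Samuel multiplicity of $R/\varpi R$ attached to the short exact sequence determined by the nilradical, and then to show that the leftover term vanishes because it is supported in too small a dimension. Throughout I would write $I=\mathrm{nil}(R)$ (so $R^{\red}=R/I$), $A=R/\varpi R$ with maximal ideal $\mathfrak{m}$, and $d=\dim A$.

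First I would record the structural preliminaries: since $R$ is $p$-torsion free, $\varpi$ is a nonzerodivisor on $R$, so $d=\dim R-1$; and since $R^{\red}$ is reduced, its associated primes are the minimal primes of $R$, none of which contains $\varpi$, so $\varpi$ is also a nonzerodivisor on $R^{\red}$. Applying $-\otimes_R R/\varpi R$ to $0\to I\to R\to R^{\red}\to 0$ gives the short exact sequence of $A$-modules
\[ 0 \longrightarrow N \longrightarrow A \longrightarrow R^{\red}/\varpi R^{\red} \longrightarrow 0, \qquad N:=(I+\varpi R)/\varpi R . \]
By additivity of Hilbert--Samuel multiplicities (with respect to $\mathfrak{m}$, taken in degree $d$) on short exact sequences, $e(\mathfrak{m},A)=e(\mathfrak{m},N)+e(\mathfrak{m},R^{\red}/\varpi R^{\red})$; the last term equals $e(R^{\red}/\varpi)$ because the maximal ideal of $R^{\red}/\varpi R^{\red}$ is the image of $\mathfrak{m}$, while the left-hand side is $e(R/\varpi)$. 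So it remains to prove $e(\mathfrak{m},N)=0$, and for this it is enough to show $N_{\mathfrak{p}}=0$ for every minimal prime $\mathfrak{p}$ of $A$: then $\mathrm{Supp}\,N$ meets each irreducible component of $\Spec A$ in a proper closed subset, whence $\dim N<d$ and $e(\mathfrak{m},N)=0$.

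The heart of the matter — and the main obstacle — is the vanishing $N_{\mathfrak{p}}=0$ at a minimal prime $\mathfrak{p}$ of $A$, and this is where the $p$-torsion-freeness and the generic reducedness of $R[1/p]$ enter together. Since $\varpi$ is a nonzerodivisor which is not a unit, Krull's principal ideal theorem gives $\operatorname{ht}_R\mathfrak{p}=1$, so $R_{\mathfrak{p}}$ is a $1$-dimensional Noetherian local ring; its minimal primes are the $\mathfrak{q}R_{\mathfrak{p}}$ with $\mathfrak{q}$ a minimal prime of $R$ contained in $\mathfrak{p}$, and for each such $\mathfrak{q}$ one has $\varpi\notin\mathfrak{q}$, so $(R_{\mathfrak{p}})_{\mathfrak{q}R_{\mathfrak{p}}}=R_{\mathfrak{q}}=(R[1/p])_{\mathfrak{q}[1/p]}$ is reduced by hypothesis. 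Hence $R_{\mathfrak{p}}$ is generically reduced, so $\mathrm{nil}(R_{\mathfrak{p}})$ is supported only at the closed point of the $1$-dimensional local ring $R_{\mathfrak{p}}$, hence has finite length, hence is killed by a power of $\varpi$; as $R_{\mathfrak{p}}$ is $\varpi$-torsion free, $\mathrm{nil}(R_{\mathfrak{p}})=0$. Since $IR_{\mathfrak{p}}=\mathrm{nil}(R)R_{\mathfrak{p}}=\mathrm{nil}(R_{\mathfrak{p}})=0$, we get $N_{\mathfrak{p}}=(IR_{\mathfrak{p}}+\varpi R_{\mathfrak{p}})/\varpi R_{\mathfrak{p}}=0$, as required. (Equidimensionality of $R$ is not logically needed for this particular argument, though it is convenient for knowing that $A$ and $R^{\red}/\varpi R^{\red}$ are equidimensional of dimension $d$; an alternative packaging of the whole proof runs through the associativity formula $e(\mathfrak{m},A)=\sum_{\mathfrak{p}}\ell(A_{\mathfrak{p}})\,e(\mathfrak{m},A/\mathfrak{p})$ over the top-dimensional minimal primes $\mathfrak{p}$, which coincide for $A$ and $A^{\red}$ with all the lengths equal to $1$ by the same localisation argument.)
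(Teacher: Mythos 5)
Your proof is correct and follows the same overall plan as the paper's: the short exact sequence $0\to N\to R/\varpi\to R^{\red}/\varpi\to 0$ (which requires $R^{\red}$ to inherit $\varpi$-torsion-freeness from $R$), additivity of the Hilbert--Samuel multiplicity in degree $d=\dim R/\varpi$, and then a vanishing argument for $e(\mathfrak m,N)$ on the grounds that $N$ has support of dimension $<d$. Where you diverge from the paper is in how that last vanishing is established. The paper works in $\Spec R$: it notes that $I=\mathrm{nil}(R)$ vanishes at every minimal prime of $R$ (from generic reducedness of $R[1/p]$, since $p$-torsion-freeness guarantees those minimal primes survive inverting $p$), so the support of $I$ has dimension $<\dim R$, and since the minimal primes of that support avoid $\varpi$ (because $I\subset R$ is $\varpi$-torsion free), the support of $I/\varpi I$ has dimension $<\dim R/\varpi$. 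You instead localize at the minimal primes $\mathfrak p$ of $R/\varpi$, which have height one in $R$, show each $R_{\mathfrak p}$ is a one-dimensional, $\varpi$-torsion-free, generically reduced local ring, and observe that its nilradical --- of finite length, hence annihilated by a power of $\varpi$ --- must therefore be zero, giving $N_{\mathfrak p}=0$. Both localization strategies are valid and use the same hypotheses in essentially the same roles; yours is a bit more hands-on at the level of one-dimensional local rings, the paper's a bit more compressed. Your parenthetical observation that equidimensionality is not logically required is also correct, and on inspection the paper's own argument does not use it either: a closed subset of $\Spec(R/\varpi)$ missing every generic point automatically has strictly smaller dimension, with or without equidimensionality. (Presumably the hypothesis is listed because it holds in the intended applications and makes the dimension bookkeeping transparent.)
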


\begin{proof}
  Let $I$ denote the kernel of the surjection $R\onto R^{\red}$. Since 
  $R$ is assumed to be $p$-torsion free, $R^{\red}$ is also $p$-torsion free
  and we thus have an exact sequence
\[ 0 \to I/\varpi I \to R/\varpi\to R^{\red}/\varpi \to 0. \]
Thus $e(R/\varpi)=e(R^{\red}/\varpi)+e(I/\varpi I,R/\varpi)$ (notation
as in \cite[\S 1.3]{kisinfmc}) and we are reduced to
showing that $e(I/\varpi I,R/\varpi)=0$. Since $R[1/p]$ is generically reduced,
the localisation $I_{\wp}$ vanishes for every minimal prime $\wp$ of
$R$. Thus the support of $I$ on $R$ is of dimension strictly smaller
than that of $R$. Since $I \subset R \subset R[1/p]$, each minimal
prime in the support of $I$ is $p$-torsion free. It follows that the
support of $I/\varpi I$ is of dimension strictly smaller than that of
$R/\varpi$. Thus $e(I/\varpi I,R/\varpi)=0$, as required. 
\end{proof}

The main result of this paper is the following.

\begin{thm}
  \label{thm: BM for GL_2 implies it for Dtimes} Conjecture \ref{BM
    conj} implies Conjecture \ref{BM conj for quat alg}.
\end{thm}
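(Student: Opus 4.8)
The plan is to produce the functional $\iota_D$ of Conjecture~\ref{BM conj for quat alg} explicitly, by setting $\iota_D:=\iota\circ\JL\colon R_\F(\ltimes)\to\Z$, where $\iota$ is the functional supplied by Conjecture~\ref{BM conj}(1) and $\JL$ is the mod $p$ Jacquet--Langlands map of Definition~\ref{defn: JL map on class functions}, and then checking the required property directly. First I would observe that $\iota_D$ is independent of the choice of $\sigma_D(\tau)$: by Remark~\ref{rem: uniqueness of functional in quat alg case} the two choices give class functions on $\ltimes$ which are interchanged by the involution $c\colon x\mapsto x^q$, while the formula $\chi\mapsto(i(z)\mapsto-\chi(z)-\chi(z^q))$ in Definition~\ref{defn: JL map on class functions} shows that $\JL(\chi\circ c)=\JL(\chi)$ for all $\chi\in\Cl$; hence $\iota_D$ is $c$-invariant, which by Remark~\ref{rem: uniqueness of functional in quat alg case} is exactly what is needed. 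So it remains, for a fixed discrete series type $\tau$ and weight $\lambda$, to compute $\iota_D(\sigmabar_D(\tau,\lambda))=\iota\bigl(\JL(\sigmabar_D(\tau,\lambda))\bigr)$ using the compatibility Theorem~\ref{thm:compatibility-JL-redn-mod-p} and to match the result with $e(R^{\tau,\lambda,\ds}/\varpi)$.

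When $\tau$ is supercuspidal this is immediate: Theorem~\ref{thm:compatibility-JL-redn-mod-p}(2) gives $\JL(\sigmabar_D(\tau,\lambda))=\sigmabar(\tau,\lambda)$, so by Conjecture~\ref{BM conj}(1) we get $\iota_D(\sigmabar_D(\tau,\lambda))=\iota(\sigmabar(\tau,\lambda))=e(R^{\tau,\lambda}/\varpi)$, and $R^{\tau,\lambda,\ds}=R^{\tau,\lambda}$ for supercuspidal $\tau$. When $\tau$ is scalar, Theorem~\ref{thm:compatibility-JL-redn-mod-p}(1) gives $\JL(\sigmabar_D(\tau,\lambda))=\sigmabar(\tau,\lambda)-\sigmabar^{\crs}(\tau,\lambda)$, so using Conjecture~\ref{BM conj}(1), Conjecture~\ref{BM conj}(2), and the equality $\iota=\iota_\crs$ of Lemma~\ref{lem: cris and ss BM conjs agree}, we get $\iota_D(\sigmabar_D(\tau,\lambda))=e(R^{\tau,\lambda}/\varpi)-e(R^{\tau,\lambda,\crs}/\varpi)$. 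Thus the scalar case reduces to the identity
\[ e(R^{\tau,\lambda,\ds}/\varpi)=e(R^{\tau,\lambda}/\varpi)-e(R^{\tau,\lambda,\crs}/\varpi). \]

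This last identity is the one place where local deformation theory genuinely enters, and is the main obstacle. I would deduce it from additivity of the Hilbert--Samuel multiplicity of the special fibre over irreducible components. For a scalar type $\tau$, the Weil--Deligne representation carried by each irreducible component of $\Spec R^{\tau,\lambda}$ has monodromy of generic rank $0$ (a crystalline component) or $1$ (a twist of Steinberg, hence a discrete series component), so the set of irreducible components of $\Spec R^{\tau,\lambda}$ is the disjoint union of those occurring in $\Spec R^{\tau,\lambda,\crs}$ and those occurring in $\Spec R^{\tau,\lambda,\ds}$, and by construction $R^{\tau,\lambda,\crs}$ and $R^{\tau,\lambda,\ds}$ are the reduced, $\cO$-flat quotients of the reduced ring $R^{\tau,\lambda}$ cut out by these two families. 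It therefore suffices to show that for a reduced, $\cO$-flat, equidimensional $\cO$-algebra $R$ one has $e(R/\varpi)=\sum_i e((R/\mathfrak{p}_i)/\varpi)$, the sum being over the minimal primes $\mathfrak{p}_i$ of $R$; here $R^{\tau,\lambda}$, $R^{\tau,\lambda,\crs}$, $R^{\tau,\lambda,\ds}$ are all $\cO$-flat and equidimensional by \cite[Theorem~2.7.6]{kisindefrings} and \cite[Theorem~3.3.4]{kisindefrings}, and no component of any of them is $\varpi$-torsion since they embed into their generic fibres. This additivity is the cycle-theoretic statement used throughout \cite[\S1.3]{kisinfmc}: since $\varpi$ is a nonzerodivisor on each $R/\mathfrak{p}_i$, localising at the generic point of a top-dimensional component of $\Spec R/\varpi$ reduces the claim to the associativity formula for lengths in a reduced one-dimensional local ring, which one then feeds into the associativity formula for Hilbert--Samuel multiplicities. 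Applying this to $R^{\tau,\lambda}$, $R^{\tau,\lambda,\crs}$ and $R^{\tau,\lambda,\ds}$ and comparing, using the partition of components above, gives the identity; by Lemma~\ref{prop:hs-mult-and-reducedness} one may equally well work with the possibly non-reduced rings $\tR^{\tau,\lambda}$ and $\tR^{\tau,\lambda,\ds}$.
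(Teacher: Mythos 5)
Your proposal is correct and follows essentially the same route as the paper: define $\iota_D:=\iota\circ\JL$, handle the supercuspidal case via Theorem~\ref{thm:compatibility-JL-redn-mod-p}(2) with $R^{\tau,\lambda,\ds}=R^{\tau,\lambda}$, and handle the scalar case via Theorem~\ref{thm:compatibility-JL-redn-mod-p}(1) together with Lemma~\ref{lem: cris and ss BM conjs agree}, reducing to the identity $e(R^{\tau,\lambda,\ds}/\varpi)=e(R^{\tau,\lambda}/\varpi)-e(R^{\tau,\lambda,\crs}/\varpi)$. The only difference is that you prove this last identity directly by additivity of Hilbert--Samuel multiplicities over the irreducible components (a valid argument), whereas the paper simply cites \cite[Prop.\ 1.3.4]{kisinfmc} applied to $R^{\tau,\lambda}\to R^{\tau,\lambda,\crs}\oplus R^{\tau,\lambda,\ds}$, which encapsulates the same cycle-theoretic dévissage.
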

\begin{proof}
  Assume that Conjecture \ref{BM conj} holds. Define
  $\iota_D:=\iota\circ\JL$. If $\tau$ is supercuspidal, then by Theorem \ref{thm:compatibility-JL-redn-mod-p}, we
  have 
  $\iota_D(\sigmabar_D(\tau,\lambda))=\iota(\sigmabar(\tau,\lambda))=e(R^{\tau,\lambda}/\varpi)=e(R^{\tau,\lambda,\ds}/\varpi)$,
  as required. If $\tau$ is scalar, then we see in the same way using
  Lemma~\ref{lem: cris and ss BM conjs agree} that
  $\iota_D(\sigmabar_D(\tau,\lambda))=\iota(\sigmabar(\tau,\lambda)-\sigmabar^\crs(\tau,\lambda))=e(R^{\tau,\lambda}/\varpi)-e(R^{\tau,\lambda,\crs}/\varpi)=e(R^{\tau,\lambda,\ds}/\varpi)$. (The
  last equality follows from~\cite[Prop.\ 1.3.4]{kisinfmc}, taking $f$
  to be the map $R^{\tau,\lambda} \to R^{\tau,\lambda,\crs}\oplus R^{\tau,\lambda,\ds}$.)
\end{proof}
\begin{cor}
  \label{cor: D BM for Qp}Suppose that $K=\Qp$ and that $p\ge 5$. Then Conjecture~\ref{BM conj for quat alg} holds.
\end{cor}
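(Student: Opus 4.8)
The plan is to deduce this immediately from the main theorem together with the known cases of the Breuil--M\'ezard conjecture for $\GL_2(\Q_p)$. By Theorem~\ref{thm: BM for GL_2 implies it for Dtimes}, Conjecture~\ref{BM conj for quat alg} for the fixed $\rhobar\colon G_{\Q_p}\to\GL_2(\F)$ follows from Conjecture~\ref{BM conj} for the same $\rhobar$; so it suffices to exhibit the functionals $\iota$ and $\iota_\crs$ of Conjecture~\ref{BM conj} in the case $K=\Q_p$, $p\ge 5$.

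For this I would invoke Kisin's work \cite{kisinfmc}, which establishes Conjecture~\ref{BM conj} (both the potentially semistable and the potentially crystalline versions) for $K=\Q_p$, $p>2$, and all $\rhobar$ outside a short explicit list of exceptional representations (essentially those which, up to twist, are reducible with scalar semisimplification or a specific cyclotomic extension), together with the subsequent resolution of the remaining cases when $p\ge5$ by Pa\v{s}k\=unas's Montr\'eal-functor methods (and, independently, by Hu--Tan and by Sander). Combining these gives Conjecture~\ref{BM conj}(1) and (2) for every $\rhobar\colon G_{\Q_p}\to\GL_2(\F)$ once $p\ge5$, and the corollary then follows by Theorem~\ref{thm: BM for GL_2 implies it for Dtimes}.

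The only point needing any care is that the proof of Theorem~\ref{thm: BM for GL_2 implies it for Dtimes} genuinely uses both parts of Conjecture~\ref{BM conj}: the potentially crystalline deformation ring $R^{\tau,\lambda,\crs}$ enters in the scalar case through Lemma~\ref{lem: cris and ss BM conjs agree} and \cite[Prop.\ 1.3.4]{kisinfmc}. Since the cited results prove the conjecture in precisely the form of Conjecture~\ref{BM conj}, this causes no difficulty. I do not expect any real obstacle here: all the substantive work lies in Theorem~\ref{thm: BM for GL_2 implies it for Dtimes} and in the cited proofs of Breuil--M\'ezard for $\GL_2(\Q_p)$, and the corollary is just the specialisation $K=\Q_p$, $p\ge5$.
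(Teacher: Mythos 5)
Your proposal is correct and follows the paper's argument exactly: apply Theorem~\ref{thm: BM for GL_2 implies it for Dtimes} and then invoke the known Breuil--M\'ezard conjecture for $\GL_2(\Q_p)$ when $p\ge 5$. The only (superficial) difference is attribution: the paper cites a single reference (Pa\v{s}k\=unas) as establishing Conjecture~\ref{BM conj} under these hypotheses, whereas you piece it together from Kisin plus later work; both are accurate framings of the same input.
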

\begin{proof}
  Under these hypotheses, Conjecture~\ref{BM conj} holds by the main result of~\cite{paskunasBM}.
\end{proof}

\begin{rem}\label{rem: unconditional cases of BM}
    It should also be possible to use the main
  result of \cite{geekisin} to prove that there is a functional
  $\iota$ satisfying the conclusion of Conjecture \ref{BM conj}
  whenever $\lambda=0$ (the only issue being for scalar types, where
  the results of \cite{geekisin} consider only the potentially
  crystalline, rather than potentially semistable representations; but
  when $\lambda=0$, the only representations excluded are ordinary, so
  it should be possible to prove the automorphy lifting theorems
  necessary to use the machinery of \cite{geekisin}). It would then
  follow that a functional $\iota_D$ as in Conjecture \ref{BM conj for
    quat alg} exists if we restrict to the case $\lambda=0$.
\end{rem}

\begin{rem}
  \label{rem: relate to traditional formulation of BM}It may seem to
  the reader that the proof of Theorem \ref{thm: BM for GL_2 implies
    it for Dtimes} is a little too simple, and that we have avoided
  various technical issues, in particular the formulation of the weight part
  of Serre's conjecture for $\rhobar$, which are usually present in
  discussions of the Breuil--M\'ezard conjecture. However, following
  \cite{geekisin}, the weight
  part of Serre's conjecture can be formulated in terms of the Breuil--M\'ezard
  conjecture; namely, the predicted weights for $\rhobar$ are
  precisely the irreducible representations $\sigmabar$ of $\GL_2(k)$
  for which $\imath(\sigmabar)>0$. (Note that if
  Conjecture~\ref{BM conj} is true, then $\imath(\sigmabar)$ is
  positive whenever it is non-zero; this follows from taking $\tau$ to
  be trivial and $W_\lambda$ to be a lift of $\sigmabar$ in the second
  part of the conjecture.)

  The analogous definition could be made for weights of $D^\times$
  (that is, for irreducible representations of $\ltimes$). In fact, if
  we translate the definition of the weight part of Serre's conjecture
  for quaternion algebras made in \cite[Definition 3.4]{MR2822861} to
  this language, it is easy to see that this is \emph{precisely} the
  definition made there.

  More precisely, let $\sigmabar$ be an $\F^\times$-character of
  $l^\times$, and let $\widetilde{\sigma}$ be its Teichm\"uller
  lift. Then the discussion before Definition~3.2 of~\cite{MR2822861}
  shows that $\widetilde{\sigma}=\sigma_D(\tau)$ for some type $\tau$
  (in fact, the tame type corresponding to
  $\widetilde{\sigma}\oplus\widetilde{\sigma}^q$ via local class field
  theory). Taking $\lambda=0$, we see that
  $\imath_D(\sigmabar)=e(R^{\tau,0,\ds}/\varpi)\ge 0$, which is positive if
  and only if $\rhobar$ has a discrete series lift of weight $0$ and
  type $\tau$. This recovers \cite[Definition 3.4]{MR2822861}.
\end{rem}

\begin{rem}
  \label{rem:formula for serre weights on D side}
  Our results in fact give rise to a formula for the predicted
  $D^\times$ weights of $\rhobar$ in terms of the predicted $\GL_2$
  weights of $\rhobar$. Under the perfect pairing
\[ R_{\F}(\GL_2(k))\times R_{\F}(\GL_2(k)) \to \Z \]
which sends two irreducibles $(\sigmabar,\sigmabar')$ to
$\dim_{\F}\Hom_{\GL_2(k)}(\sigmabar,\sigmabar')$, we can identify the
functional $\iota$ with an element
$\sum_{\sigmabar}\mu_{\rhobar}(\sigmabar)\sigmabar$ of
$R_{\F}(\GL_2(k))$. We have a similar pairing
\[ R_{\F}(l^\times)\times R_{\F}(l^\times) \to \Z \]
which allows us to think of $\iota_D$ as an element of
$R_{\F}(l^\times)$. Moreover, we may consider the adjoint $\JL^* : R_{\F}(\GL_2(k)) \to
R_{\F}(l^\times)$ of the map $\JL$ with respect to these pairings.
 Since $\iota_D = \iota\circ \JL$, we see that for
any element $V$ of $R_{\F}(l^\times)$, we have
\[ (\iota_D,V) = (\iota,\JL(V)) = (\JL^*(\iota),V).\]
In other words, $\iota_D = \JL^*(\iota)$. Note that for any
irreducible $\F[\GL_2(k)]$-representation $\sigmabar$, we have
\[ \JL^*(\sigmabar) = \sum_{\xi} m_\xi(\sigmabar) [\xi] +
\sum_{\chi} m_\chi(\sigmabar) [\chi\circ \mathbf{N}_{l/k}] \]
where $\xi$ runs over characters $l^\times \to \F^\times$ not
factoring through $\mathbf{N}_{l/k}$ and $\chi$ runs over characters
$k^\times \to \F^\times$ and where:
\begin{itemize}
\item $m_\xi(\sigmabar)$ is equal to the multiplicity with which $\sigmabar$ appears in
  $\overline{\Theta(\xi)}$ (which is either 0 or 1 by
  \cite[Proposition 1.3]{MR2392355});
\item $m_\chi(\sigmabar)$ is 1 if $\sigmabar = \sp_\chi$;
  it is $-1$ if $\sigmabar=\chi\circ\det$ and it is 0 otherwise.
\end{itemize}
Thus, we have:
\[ \iota_D = \sum_{\xi}\left(\sum_{\sigmabar}
  m_{\xi}(\sigmabar)\mu_{\rhobar}(\sigmabar)\right)[\xi] +
\sum_{\chi}\left(\mu_{\rhobar}(\sp_\chi)-\mu_{\rhobar}(\chi\circ\det)\right)[\chi\circ \mathbf{N}_{l/k}].\]

\end{rem}

\bibliographystyle{amsalpha} 
\bibliography{BMquatalg}
\end{document}